\definecolor{ffffff}{rgb}{1.0,1.0,1.0}
\definecolor{qqqqff}{rgb}{0.0,0.0,1.0}
\definecolor{ffqqqq}{rgb}{1.0,0.0,0.0}
\definecolor{zzzzqq}{rgb}{0.6,0.6,0.0}
\definecolor{marronet}{rgb}{0.6,0.2,0}
\definecolor{negre}{rgb}{0,0,0}
\definecolor{vermell}{rgb}{0.8,0.05,0.05}
\definecolor{blau}{rgb}{0.2,0.1,1}
\definecolor{blauclar}{rgb}{0.,0.,1.}
\definecolor{grisfosc}{rgb}{0.25098039215686274,0.25098039215686274,0.25098039215686274}
\definecolor{verd}{rgb}{0.1,0.6,0.1}
\definecolor{taronja}{rgb}{0.9,0.6,0.05}
\definecolor{vermellclar}{rgb}{1.,0.,0.}
\definecolor{verdet}{rgb}{0,0.8,0.1}
\definecolor{blauverd}{rgb}{0,0.4,0.2}
\definecolor{grisclar}{rgb}{0.6274509803921569,0.6274509803921569,0.6274509803921569}
\newcommand*\circled[1]{\tikz[baseline=(char.base)]{
            \node[shape=circle,draw,inner sep=2pt] (char) {#1};}}
\newcommand{\C}{{\mathbb C}}       
\newcommand{\R}{{\mathbb R}}       
\newcommand{\N}{{\mathbb N}}       
\newcommand{\DDD}{{\mathbb D}}
\newcommand{\rf}[1]{{(\ref{#1})}}
\newcommand{\supp}{{\rm supp}}
\newcommand{\Beurling}{{\mathcal B}}
\newcommand{\Cauchy}{{\mathcal C}}
\newcommand{\norm}[1]{{\left\| {#1} \right\|}}
\newtheorem{theorem}{Theorem}
\newtheorem*{theorem*}{Theorem}
\newtheorem{lemma}[theorem]{Lemma}
\newtheorem{corollary}[theorem]{Corollary}
\newtheorem*{corollary*}{Corollary}
\newtheorem{conjecture}[theorem]{Conjecture}
\newtheorem{definition}[theorem]{Definition}
\newtheorem{remark}[theorem]{Remark}
\numberwithin{subsection}{section}
\numberwithin{theorem}{section}
\numberwithin{equation}{section}
\numberwithin{figure}{section}
\title{Beltrami equations in the plane and Sobolev regularity}
\author{Mart\'i Prats
\thanks{MP (De\-par\-ta\-men\-to de Ma\-te\-m\'a\-ti\-cas, U\-ni\-ver\-si\-dad Au\-t\'o\-no\-ma de Ma\-drid - ICMAT, Spain): \texttt{marti.prats@uam.es}.}}
\begin{document}
\maketitle
\bibliographystyle{alpha}

\begin{abstract} 
New results regarding the Sobolev regularity of the principal solution of the linear Beltrami equation $\bar{\partial} f = \mu \partial f + \nu \overline{\partial f}$ for discontinuous Beltrami coefficients $\mu$ and $\nu$ are obtained, using Kato-Ponce commutators, obtaining that the $\overline \partial f$ belongs to a Sobolev space with the same smoothness as the coefficients but some loss in the integrability parameter. A conjecture on the cases where the limitations of the method do not work  is raised.
\end{abstract}

\section{Introduction}
Given $\mu, \nu \in L^\infty$ compactly supported satisfying the elliptic condition
\begin{equation}\label{eqElliptic}
\norm{|\mu|+|\nu|}_{L^\infty}=\kappa<1,
\end{equation}
the Beltrami equation 
\begin{equation}\label{eqBeltrami}
\bar{\partial} f = \mu \partial f + \nu \overline{\partial f}
\end{equation}
has a unique homeomorphic solution $f\in W^{1,2}_{loc}$ such that $f(z)-z=\mathcal{O}_{z\to\infty}(1/z)$, which we call principal solution to \rf{eqBeltrami}. The existence of this solution depends deeply on the fact that the Beurling transform 
$$\Beurling f = - p.v.  \frac1{\pi z^2} * f$$
is bounded on $L^p$ spaces for $1<p<\infty$ and unitary in $L^2$ (see \cite[Chapter 4]{AstalaIwaniecMartin}, for instance). 

For each $\kappa<1$, we define the extremal exponent $p_\kappa:=1+\frac{1}{\kappa}>2$. 
In 1992, Kari Astala published a celebrated theorem on the area distortion of quasiconformal mappings, which implies that every quasiconformal mapping $f$ with Beltrami coefficient $\mu\in L^\infty_{c}$ such that $\norm{\mu}_{L^\infty}=\kappa<1$ satisfies that
\begin{equation}\label{eqAstala}
\bar{\partial} f \in L^p \mbox{ \quad\quad whenever } \frac{1}{p_\kappa} < \frac1p \leq 1 
\end{equation}
(see \cite[Corollary 1.2]{Astala}). Some years later, Astala, Iwaniec and Saksman found the following remarkable (and sharp) result.
\begin{theorem}[{\cite[Theorem 3]{AstalaIwaniecSaksman}}]\label{theoAIS}
Given $\mu, \nu \in L^\infty$ with $\norm{|\mu|+|\nu|}_{L^\infty}=\kappa<1$, the operator
$$Id-\mu\Beurling-\nu\overline{\Beurling}$$
is invertible on $L^p(\C)$ for $\frac{1}{p_\kappa}<\frac1p<\frac{1}{p_\kappa'}$, with
\begin{equation}\label{eqInverseBeltramiOperatorUniformControl}
\norm{(Id-\mu\Beurling-\nu\overline{\Beurling})^{-1}}_{L^p\to L^p}\leq C_{\kappa,p}.
\end{equation}
\end{theorem}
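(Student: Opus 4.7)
The plan is to establish the theorem first at $p=2$ by a Neumann series, and then extend it to the full sharp range by combining the quasiconformal framework with Astala's area distortion theorem \rf{eqAstala}. For $p=2$, since $\Beurling$ is an $L^2$-isometry and $\norm{|\mu|+|\nu|}_{L^\infty}\leq \kappa<1$, the operator $T:=\mu\Beurling+\nu\overline{\Beurling}$ satisfies $\norm{Tf}_{L^2}\leq \kappa\norm{f}_{L^2}$, so that $Id-T$ is invertible on $L^2$ via the geometric Neumann series with $\norm{(Id-T)^{-1}}_{L^2\to L^2}\leq 1/(1-\kappa)$.

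For general $p$ in the critical range, I would reduce the invertibility question to a Beltrami equation through the Cauchy transform. If $g\in L^p$ is compactly supported and $h\in L^p$ solves $(Id-\mu\Beurling-\nu\overline{\Beurling})h=g$, then $\psi:=\Cauchy h$ satisfies $\bar\partial\psi=h$ and $\partial\psi=\Beurling h$, so that $\psi$ solves the inhomogeneous Beltrami equation $\bar\partial\psi-\mu\partial\psi-\nu\overline{\partial\psi}=g$. Controlling $h$ in $L^p$ therefore amounts to controlling $\bar\partial\psi$ in $L^p$ for solutions of this equation.

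The required $L^p$-bound on $\bar\partial\psi$ should come from Astala's area distortion theorem, which already delivers $\bar\partial f\in L^p$ in the sharp range whenever $f$ is the principal solution of the homogeneous equation. To reduce the inhomogeneous problem to the homogeneous one, I would use a Stoilow-type factorization $\psi=F\circ f$ with $f$ the principal solution of the homogeneous problem; then $F$ satisfies a $\bar\partial$-equation whose right-hand side is $g$ transported through $f^{-1}$, and the Jacobian control encoded in Astala's theorem transfers $L^p$-norms cleanly through the change of variables. This yields the a priori estimate $\norm{h}_{L^p}\lesssim \norm{g}_{L^p}$ on a dense subclass of data.

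The main obstacle is upgrading this a priori estimate into the uniform bound \rf{eqInverseBeltramiOperatorUniformControl}. For this I would run a continuity argument along the family $Id-t\mu\Beurling-t\nu\overline{\Beurling}$, $t\in[0,1]$, tracking the dependence of all constants on $\kappa$ and $p$: at $t=0$ the operator is the identity, and at each $t$ the $L^2$-invertibility combined with the a priori $L^p$-estimate produces closed range and injectivity, hence invertibility. Duality with the adjoint data then closes the range symmetrically about $1/p=1/2$, yielding the full interval $1/p_\kappa<1/p<1/p_\kappa'$. The subtle point is that Astala's theorem furnishes higher integrability only qualitatively near the extremal exponent, so sharpening it into a quantitative, $\kappa$-uniform bound on the inverse is the heart of the difficulty.
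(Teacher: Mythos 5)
The paper offers no proof of this statement: it is imported verbatim as Theorem 3 of Astala--Iwaniec--Saksman, so there is nothing internal to compare your argument against. Judged on its own terms, your proposal correctly reproduces the broad architecture of the original proof (Neumann series at $p=2$; reduction of the resolvent equation to an inhomogeneous Beltrami equation via the Cauchy transform; Stoilow factorization through the principal homogeneous solution; input from the sharp area distortion theorem), but it is a plan rather than a proof, and the step you yourself flag as ``the heart of the difficulty'' is in fact the entire content of the theorem.

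Concretely, three gaps remain. First, in the factorization step the presence of the $\nu\overline{\partial\psi}$ term means the effective Beltrami coefficient $\mu+\nu\,\overline{\partial\psi}/\partial\psi$ depends on the unknown $\psi$, so the homeomorphism $f$ in $\psi=F\circ f$ cannot be fixed in advance; this circularity must be resolved (as in the original paper) before the change of variables can be performed. Second, the claim that ``the Jacobian control encoded in Astala's theorem transfers $L^p$-norms cleanly through the change of variables'' is precisely where the critical interval $(p_\kappa',p_\kappa)$ and the uniform constant $C_{\kappa,p}$ must be extracted; this requires the sharp quantitative form of area distortion (weak-type integrability of $J_f$ at the borderline exponent) together with an interpolation argument exploiting holomorphic dependence on the Beltrami parameter, none of which is supplied by the qualitative statement \rf{eqAstala}. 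Third, the concluding continuity argument is not self-sustaining: openness of the set of parameters $t$ for which $Id-t\mu\Beurling-t\nu\overline{\Beurling}$ is invertible on $L^p$ requires a $t$-uniform bound on the inverse, which is exactly the a priori estimate still missing. As written, the proposal assumes the quantitative conclusion in order to derive it.
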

When the coefficients satisfy some extra assumption, we can improve the previous results. We discuss this results in Section \ref{secPreviousResults}, after a brief introduction of the function spaces we consider. Nevertheless, we sketch here the general facts.  Given $0<s<\infty$ and $1<p<\infty$, we say that the Sobolev space $W^{s,p}(\R^{d})$ (in the sense of Bessel potential spaces, see \cite[Section 2.2.2]{TriebelTheory}) is critical if $s-\frac dp=0$. If $s-\frac dp>0$ it is called supercritical and if, instead, $s-\frac dp<0$, then it is called subcritical. Note that so-called differential dimension $s-\frac dp$ coincides with the homogeneity exponent of the semi-norms of these spaces. The functions in supercritical spaces are continuous, while the functions on critical spaces  are just in the space of vanishing mean oscillation functions ($VMO$), i.e., the closure of $C^\infty_c$ in $BMO$. For the subcritical spaces we have less self-improvement (see Section \ref{secSpaces} below). 

Roughly speaking, when the coefficients $\mu$ and $\nu$ are in a supercritical Sobolev space, then $\bar\partial f$ inherits the regularity of the Beltrami coefficient. In the critical situation, there is a small loss, and in the subcritical case, there is a bigger gap, in the spirit of \rf{eqAstala}, where $\mu$ is in every $L^p$ space but $\bar\partial f$ is only $p$-integrable for a certain range.

The supercritical case is well understood (see \cite[Chapter 15]{AstalaIwaniecMartin} for H\"older spaces and \cite{ClopFaracoMateuOrobitgZhong} and \cite{CruzMateuOrobitg} for Sobolev, as well as Besov and Triebel-Lizorkin spaces). The critical case is studied in the latter two papers as well as in \cite{BaisonClopOrobitg} and \cite{BaisonClopGiovaOrobitgNapoli}, while the literature on the subcritical cases is less complete (see \cite{ClopFaracoMateuOrobitgZhong} and \cite{ClopFaracoRuiz}). This note is devoted to unify the approaches for the critical and subcritical situations, in the quest to find a complete sharp theory. 

Sharp bounds in this theory may lead to a better understanding of the stability of the Calder\'on inverse problem, as shown in \cite{ClopFaracoRuiz}. There, the authors prove that if one knows all the possible pairs of Dirichlet and  Neumann data of the solutions to the conductivity equation for conductivities satisfying certain a priori subcritical Sobolev conditions, then the recovery is stable. A crucial step there is to solve a Beltrami equation as \rf{eqBeltrami} above: after showing Sobolev regularity of the principal solution to a certain family of equations, the authors show an asymptotic decay of the so-called Complex Geometric Optics Solution. Greater Sobolev regularity of these solutions is translated into higher decay of the solutions and better stability estimates.

 We show the following result.
\begin{theorem}\label{theoRegularityCoefficients}
Let $0<s<2$, $1<p<\infty$, let $\mu,\nu \in W^{s,p}_{c}(\C)\cap L^\infty$ satisfy \rf{eqElliptic} for $\kappa<1$ and let $f$ be the principal solution to the Beltrami equation \rf{eqBeltrami}. 

If $s=\frac2p$, then
 \begin{equation}\label{eqBigTarget2}
 \bar{\partial} f\in W^{s,q} \mbox{ \quad\quad for every } \frac{1}{q} > \frac{1}{p}.
 \end{equation}

If $s<\frac2p$ and  $\frac{1}{p}<\frac{1}{p_\kappa'}-\frac{1}{p_\kappa} = \frac{1-\kappa}{1+\kappa}$, then
 \begin{equation}\label{eqBigTarget}
 \bar{\partial} f\in W^{s,q} \mbox{ \quad\quad for every } \frac{1}{q} > \frac{1}{p}+\frac{1}{p_\kappa}.
 \end{equation}
\end{theorem}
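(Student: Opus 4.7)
The plan is to reformulate the Beltrami equation as a fixed-point equation for $h:=\bar\partial f$, apply a fractional derivative $D^s=(-\Delta)^{s/2}$, and close the argument via Kato-Ponce commutators combined with the Astala-Iwaniec-Saksman invertibility of Theorem \ref{theoAIS}. First I would write $f(z)=z+\Cauchy h(z)$ with $\Cauchy$ the Cauchy transform, so that $\partial f=1+\Beurling h$ and $\bar\partial f=h$. Substituting into \rf{eqBeltrami} gives
\[
(Id-\mu\Beurling-\nu\overline{\Beurling})\,h=\mu+\nu .
\]
Since $\mu+\nu\in L^\infty_c$, Theorem \ref{theoAIS} already yields $h\in L^r(\C)$ for every $r$ with $\frac{1}{p_\kappa}<\frac1r<\frac{1}{p_\kappa'}$, which will serve as the baseline integrability to be bootstrapped.

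Next, because $\Beurling$ and $\overline{\Beurling}$ are Fourier multipliers, $D^s$ commutes with them, and I would apply $D^s$ to both sides of the fixed-point equation to obtain
\[
(Id-\mu\Beurling-\nu\overline{\Beurling})\,D^s h = D^s(\mu+\nu)+[D^s,\mu]\Beurling h+[D^s,\nu]\overline{\Beurling}\,h .
\]
I would then invoke the Kato-Ponce (fractional Leibniz) commutator inequality, which, using $\mu\in L^\infty$ to absorb the lower-order terms, produces estimates of the form
\[
\norm{[D^s,\mu]\,g}_{L^q}\lesssim \norm{\mu}_{W^{s,p}}\norm{g}_{L^r}\qquad \text{whenever } \tfrac1q=\tfrac1p+\tfrac1r,
\]
and similarly for $\nu$. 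Combined with the $L^r$-boundedness of the Beurling transform and the fact that the compactly supported source $D^s(\mu+\nu)$ lies in $L^p\hookrightarrow L^q_{loc}$, the right-hand side of the displayed identity lies in $L^q$ for every $\frac1q>\frac1p+\frac1r$ with $r$ in the AIS range.

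For the subcritical case, the assumption $\frac1p<\frac{1}{p_\kappa'}-\frac{1}{p_\kappa}$ is precisely what allows one to choose $\frac1r$ slightly above $\frac{1}{p_\kappa}$ so that $\frac1q=\frac1p+\frac1r$ still satisfies $\frac1q<\frac{1}{p_\kappa'}$; Theorem \ref{theoAIS} then inverts the Beltrami operator on $L^q$ and gives $D^sh\in L^q$, i.e.\ \rf{eqBigTarget}. For the critical case $s=2/p$, the Sobolev embedding $W^{2/p,p}\hookrightarrow VMO$ puts $\mu,\nu$ in $VMO\cap L^\infty$, where Coifman-Rochberg-Weiss/Uchiyama-type compactness of the commutators $[\Beurling,\mu],[\overline{\Beurling},\nu]$ on every $L^q$ yields, through a Fredholm-plus-uniqueness argument, invertibility of $Id-\mu\Beurling-\nu\overline{\Beurling}$ on the entire range $1<q<\infty$. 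The Kato-Ponce step then produces $D^sh\in L^q$ for every $\frac1q>\frac1p$, giving \rf{eqBigTarget2}.

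The principal obstacle is the validity of the commutator estimate $\norm{[D^s,\mu]g}_{L^q}\lesssim\norm{\mu}_{W^{s,p}}\norm{g}_{L^r}$ throughout the full range $0<s<2$, with no loss of derivative on $g$. For $s<1$ the clean Kenig-Ponce-Vega type bound is available, but for $1\le s<2$ one must redistribute a derivative onto $\mu$ while exploiting $\mu\in L^\infty$ to keep the $g$-factor at zero regularity; the bilinear endpoints and the interaction between the Bessel-potential realization of $D^s$ and the compact support of $\mu,\nu$ (requiring careful cut-offs) are the delicate technical points on which the argument rests.
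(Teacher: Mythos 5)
Your overall strategy---rewriting the equation as $(Id-\mu\Beurling-\nu\overline{\Beurling})h=\mu+\nu$, applying $D^s$, commuting $D^s$ with $\Beurling$, estimating the resulting commutators by Kato--Ponce, and inverting on $L^q$ via Theorem \ref{theoAIS} (with the VMO/compactness upgrade to the full range $1<q<\infty$ in the critical case)---is exactly the paper's strategy for $s<1$. However, for $1\le s<2$ your argument rests on a commutator bound $\norm{[D^s,\mu]g}_{L^q}\lesssim\norm{\mu}_{W^{s,p}}\norm{g}_{L^r}$ with \emph{all} $s$ derivatives landing on $\mu$ and none on $g$, and you yourself flag this step as unresolved. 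This is a genuine gap: for $s\ge 1$ the fractional Leibniz expansion of $D^s(\mu g)-\mu D^s g$ contains a term of the form $\nabla\mu\cdot D^{s-2}\nabla g$, of order $s-1$ in $g$, so no zero-loss estimate of the kind you invoke is available, and it cannot be recovered by cut-offs. The paper avoids the issue entirely: for $s>1$ it first differentiates the equation once to obtain an identity for $\partial h_n$, and only then applies $D^{s-1}$ with $0<s-1<1$, so that only the first-order Hofmann commutator (Theorem \ref{theoHofmann}) is ever used. The four resulting terms are controlled using the interpolation \rf{eqInterpolateLInfty} (which places $\mu$ in $W^{1,sp}\cap W^{s-1,sp/(s-1)}$ thanks to $\mu\in L^\infty$ and the compact support) together with the already-established bounds $\partial h_n\in L^r$ (the cited $s=1$ results of Clop et al.\ and Bais\'on) and $D^{s-1}h_n\in L^r$ (the $s<1$ case just proven). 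Supplying this reduction, or an honest substitute for the missing estimate, is the main content of the $1<s<2$ case.

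Two further points need attention even for $s<1$. First, you apply $D^s$ and the commutator identity directly to $h$, which is only known to lie in some $L^r$; the paper instead mollifies the coefficients, works with smooth compactly supported $h_n$, obtains uniform $W^{s,q}$ bounds, and passes to the limit via Banach--Alaoglu together with an $L^2$-convergence argument identifying the weak limit with $h$. Second, the remark that the source term ``$D^s(\mu+\nu)$ lies in $L^p\hookrightarrow L^q_{loc}$'' does not suffice: since $\frac1q>\frac1p$, one needs $D^s\mu\in L^q$ \emph{globally} for $q<p$, and $D^s\mu$ is not compactly supported even though $\mu$ is; this is precisely what Lemma \ref{lemCompact} provides.
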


However, the restriction $\frac{1}{p}<\frac{1}{p_\kappa'}-\frac{1}{p_\kappa}$ seems rather unnatural (see Section \ref{secPreviousResults}). Due to this fact, there is only room for $p$ in the conditions for \rf{eqBigTarget} if $s<2\frac{1-\kappa}{1+\kappa}$, which is equivalent to $\kappa<\frac{2-s}{2+s}$. Therefore it is natural to ask whether it can be removed or not (see Conjecture \ref{conjRegularityCoefficients} below).

Using some embeddings explained in Section \ref{secKnown}, we can deduce the following corollary, which covers the case when $\frac{1}{p}\geq \frac{1}{p_\kappa'}-\frac{1}{p_\kappa}$ as well.
\begin{corollary}\label{coroRegularityCoefficients}
Let $0<s<2$, $1<p<\infty$, with $s<\frac2p$. If $0< \Theta\leq 1$ with $\frac{\Theta}{p}<\frac{1}{p_\kappa'}-\frac{1}{p_\kappa}$, then
 $$\bar{\partial} f\in W^{\theta s, q} \mbox{ \quad\quad for every } \frac{1}{q} > \frac{\Theta}{p}+\frac{1}{p_\kappa}.$$
\end{corollary}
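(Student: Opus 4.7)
The plan is to reduce the corollary to Theorem \ref{theoRegularityCoefficients} by exchanging smoothness of the coefficients for integrability. Since $\mu,\nu \in W^{s,p}_c \cap L^\infty$, the first step is to invoke the embedding
$$W^{s,p}_c \cap L^\infty \;\subset\; W^{\Theta s,\, p/\Theta}_c \cap L^\infty,$$
which comes from complex interpolation between $W^{s,p}$ and $L^\infty$ at parameter $\alpha = 1-\Theta$: identifying $W^{s,p} = F^{s}_{p,2}$ and using $L^\infty \hookrightarrow bmo = F^{0}_{\infty, 2}$, the intermediate Triebel--Lizorkin space is $F^{\Theta s}_{p/\Theta,2} = W^{\Theta s, p/\Theta}$, and the general principle $X_0 \cap X_1 \hookrightarrow [X_0, X_1]_\alpha$ places $\mu$ and $\nu$ there with the accompanying inequality
$$\norm{\mu}_{W^{\Theta s, p/\Theta}} \lesssim \norm{\mu}_{L^\infty}^{1-\Theta}\,\norm{\mu}_{W^{s,p}}^{\Theta}.$$
This is precisely the type of embedding that Section \ref{secKnown} is quoted for, so I would apply it directly.

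Once the coefficients have been relocated in $W^{\Theta s,\, p/\Theta}_c \cap L^\infty$, I would apply Theorem \ref{theoRegularityCoefficients} with the new parameters $s' := \Theta s$ and $p' := p/\Theta$. The subcritical hypothesis $s' < 2/p'$ becomes $\Theta s < 2\Theta/p$, which is equivalent to the standing assumption $s < 2/p$; and the extra restriction $1/p' < 1/p_\kappa' - 1/p_\kappa$ reads $\Theta/p < 1/p_\kappa' - 1/p_\kappa$, which is exactly the hypothesis of the corollary. The theorem then delivers $\bar\partial f \in W^{\Theta s,\, q}$ for every $1/q > 1/p' + 1/p_\kappa = \Theta/p + 1/p_\kappa$, the desired conclusion.

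The only step that really requires work is the interpolation embedding itself, and the subtle point is the correct handling of $L^\infty$ within the Triebel--Lizorkin scale, either via the $L^\infty \hookrightarrow bmo$ identification above or through a direct Littlewood--Paley / real interpolation argument; notice that a pure Sobolev embedding cannot do this job, because $s - 2/p < 0 < \Theta s - 2\Theta/p$, so both endpoints must genuinely contribute. Compact support is automatically preserved. Modulo this standard harmonic-analytic fact, which I expect Section \ref{secKnown} to have already collected, the corollary is an immediate consequence of the main theorem, and the endpoint $\Theta = 1$ trivially recovers Theorem \ref{theoRegularityCoefficients}.
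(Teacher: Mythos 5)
Your argument is correct and is exactly the paper's intended route: the embedding you need is precisely \rf{eqInterpolateLInfty} (quoted from Runst--Sickel, Theorem 2.2.5), which places the compactly supported bounded coefficients in $W^{\Theta s,\,p/\Theta}$ because $\Theta s\cdot\frac{p}{\Theta}=sp$ and $\Theta s<s<\frac2p$, after which Theorem \ref{theoRegularityCoefficients} applies with $s'=\Theta s$, $p'=p/\Theta$ just as you describe. One small slip in your closing remark: the differential dimension $\Theta s-\frac{2\Theta}{p}=\Theta\bigl(s-\frac2p\bigr)$ is still negative, not positive as written; your point nonetheless stands because it is strictly larger than $s-\frac2p$, which is exactly why no Sobolev-type embedding from $W^{s,p}$ alone can reach the target and the $L^\infty$ endpoint must genuinely enter.
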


The paper is organized in the following way. Section \ref{secKnown} is devoted to making the background of this article clear. In Section \ref{secSpaces} the definitions and basic properties of the Triebel-Lizorkin and related spaces are given. Section \ref{secCompactlySupported}  specifies some properties of compactly supported Triebel-Lizorkin functions, in order to make a clear picture of the problem and to provide the reader a guide to understand the full scale of Sobolev regularity obtained for the principal mappings in Theorem \ref{theoRegularityCoefficients}. In Section \ref{secPreviousResults} there is a discussion on the existing results using the concepts introduced in the former sections. Finally, Section \ref{secProof} contains the proof of Theorem \ref{theoRegularityCoefficients}.

\section{Background}\label{secKnown}

\subsection{Definitions and well-known properties of function spaces}\label{secSpaces}
First we recall some results on Triebel-Lizorkin spaces.

Let $\{\psi_j\}_{j=0}^\infty\subset C^\infty_c(\R^d)$ with $\psi_0$  supported in $\DDD(0,2)$, $\psi_j$ supported in $\DDD(0,2^{j+1})\setminus \DDD(0,2^{j-1})$ for  $j\geq 1$,  such that $\sum_{j=0}^\infty \psi_j \equiv 1$ and for every multiindex $\alpha\in \N^d$ there exists a constant $c_\alpha$ such that
\begin{equation*}
\norm{D^\alpha \psi_j}_\infty \leq \frac{c_\alpha}{2^{j (\alpha_1+\cdots +\alpha_d)} } \mbox{\,\,\, for every $j\geq 0$}.
\end{equation*}

We will use the classical notation $\widehat f$ for the Fourier transform of a given Schwartz function,
$$\widehat f (\xi)=\int_{\R^d} e^{-2\pi i x\cdot \xi} f(x)\, dx,$$ 
and $\widecheck f$ will denote its inverse.
It is well known that the Fourier transform can be extended to the whole space of tempered distributions by duality and it induces an isometry in $L^2$ (see for example \cite[Chapter 2]{Grafakos}).

 \begin{definition}
Let $s \in \R$,  $0< p< \infty$, $0< q\leq\infty$. For any tempered distribution $f\in S'(\R^d)$ we define its non-homogeneous Triebel-Lizorkin quasi-norm
$$\norm{f}_{F^s_{p,q}}=\norm{\norm{\left\{2^{sj}\left(\psi_j \widehat{f}\right)\widecheck{\,}\right\}}_{l^q}}_{L^p},$$
and we call $F^s_{p,q}\subset S'$ to the set of tempered distributions such that this quasi-norm is finite. 
\end{definition}

These quasi-norms (norms when $p,q \geq 1$) are equivalent for different choices of $\{\psi_j\}_{j=0}$ (see \cite[Section 2.3]{TriebelTheory}). Changing the order of integration and summation above we get the non-homogeneous Besov quasi-norm
$$\norm{f}_{B^s_{p,q}}=\norm{\left\{2^{sj}\norm{\left(\psi_j \widehat{f}\right)\widecheck{\,} }_{L^p}\right\}}_{l^q},$$
which makes sense for $0< p\leq \infty$. 

For $q=2$ and $1<p<\infty$ the Triebel-Lizorkin spaces coincide with the so-called Bessel-potential spaces.
  In addition, if $s\in \N$ they coincide with the usual Sobolev spaces of functions in $L^p$ with weak derivatives up to order $s$ in $L^p$,  and they coincide with $L^p$ for $s=0$ (\cite[Section 2.5.6]{TriebelTheory}). In the present text, we use the convention 
\begin{equation*}
 W^{s,p}:= F^s_{p,2} \mbox{ \quad\quad for } s\geq 0 \mbox{ and } 1<p<\infty.
\end{equation*}
and for $s=0$ we write
\begin{equation*}
h^p:=F^0_{p,2} \mbox{ \quad\quad for } 0<p<\infty,
\end{equation*}
that is, the non-homogeneous hardy space (which coincides with $L^p$ for $1<p<\infty$). With this convention, complex interpolation between Sobolev spaces is a Sobolev space (see \cite[Section 2.4.2, Theorem 1]{TriebelInterpolation}). For more information on the relation between Triebel-Lizorkin and Besov spaces with other classical spaces we refer the reader to \cite[Section 2.2.2]{TriebelTheory} and \cite[Theorem 2.2.2]{RunstSickel}. 

There is a whole structure of embeddings for Triebel-Lizorkin spaces (see  \cite[Proposition 2.3.2 and Theorem 2.7.1]{TriebelTheory}, \cite[Chapter 1]{RunstSickel} for end-point cases). For instance, if $-\infty<s<\infty$, $0< p < \infty$, $0< q_0, q_1 \leq \infty$ and $\varepsilon>0$, we have that 
\begin{equation}\label{eqTrivialEmbedding}
F^s_{p,q_{0}}\subset F^s_{p,q_{0} + \varepsilon} \quad \quad \mbox{ and } \quad\quad F^{s+\varepsilon}_{p,q_0}\subset F^{s}_{p,q_1}
\end{equation}
and if $-\infty<s_1<s_0<\infty$, $0< p_0 < p_1 < \infty$ satisfy that $s_0-\frac{d}{p_0}=s_1-\frac{d}{p_1}$, for $0< q_0, q_1 \leq \infty$ we have that
\begin{equation}\label{eqSobolevEmbedding}
F^{s_0}_{p_0,q_0}\subset F^{s_1}_{p_1,q_1} .
\end{equation}
(see Figure \ref{figEmbeddingsGeneral}). Besov spaces present a similar structure.

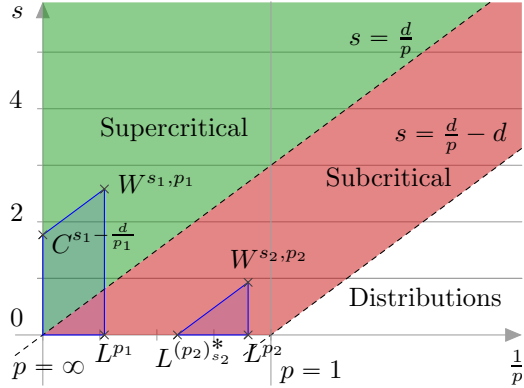
\begin{figure}[ht]
\centering
\begin{tikzpicture}[line cap=round,line join=round,>=triangle 45,x=3.0cm,y=0.75cm]
\draw[->,color=grisclar] (-0.12,0.) -- (2.1,0.);
\foreach \x in {0,0.25,0.5,0.75,1.}
\draw[shift={(\x,0)},color=grisclar] (0pt,2pt) -- (0pt,-2pt);
 { \draw[->,color=grisclar] (0.,-0.8884766569124901) -- (0.,5.891712114631426);};
 {\foreach \y in {,1.,2.,3.,4.,5.}\draw[shift={(0,\y)},color=grisclar] (2.1,0.) -- (-2pt,0.);};
{\clip(-0.2209869282157063,-0.8884766569124901) rectangle (2.1,6);};
{\fill[color=vermell,fill=vermell,fill opacity=0.5] (0.,0.) -- (1.,0.) -- (2.1,3.3) -- (2.1,5.88) -- (1.96,5.88) -- cycle;};
{\fill[color=verd,fill=verd,fill opacity=0.5] (0.,0.) -- (1.96,5.88) -- (0.,5.88) -- cycle;};
{\fill[color=blauclar,fill=blauclar,fill opacity=0.1]  (0.27,2.58)-- (0.,1.77) -- (0.,0.) -- (0.27,0.) -- cycle;};
{\fill[color=blauclar,fill=blauclar,fill opacity=0.1]  (0.9,0.93) -- (0.59,0.)-- (0.9,0.)-- cycle;};
{\draw (2,-0.1) node[anchor=north west] {$\frac{1}{p}$};};
{\draw [dash pattern=on 2pt off 2pt,domain=-0.1209869282157063:1.96] plot(\x,{(-0.--3.*\x)/1.});};
{\draw [dash pattern=on 2pt off 2pt,domain=0.88:2.1] plot(\x,{(-3.--3.*\x)/1.});};
{\draw [color=grisclar] (1.,-0.8884766569124901) -- (1.,5.891712114631426);};
{\draw (-0.18062866443410706,5.972428647625997) node[anchor=north west] {$s$};};
{\draw (1,-0.3) node[anchor=north west] {$p=1$};};
{\draw (-0.1725570116777872,-0.3) node[anchor=north west] {$p=\infty$};};
{\draw (1.3,5.7) node[anchor=north west] {$s=\frac{d}{p}$};};
{\draw (1.5,4.) node[anchor=north west] {$s=\frac{d}{p}-d$};};
{\draw (0.21,4) node[anchor=north west] {$\mbox{Supercritical}$};};
{\draw (1.2,3.12) node[anchor=north west] {$\mbox{Subcritical}$};};
{\draw (1.3,1) node[anchor=north west] {$\mbox{Distributions}$};};
{\draw (-0.18466449081226698,0.6249583367357056) node[anchor=north west] {$0$};};
{\draw (-0.18466449081226698,2.4814385956108254) node[anchor=north west] {$2$};};
{\draw (-0.1887003171904269,4.479172787226443) node[anchor=north west] {$4$};};
{\draw [color=blauclar] (0.27,2.58)-- (0.,1.77);};
{\draw [color=blauclar] (0.,1.77)-- (0.,0.);};
{\draw [color=blauclar] (0.,0.)-- (0.27,0.);};
{\draw [color=blauclar] (0.27,0.)-- (0.27,2.58);};
{\draw (0.286631535978446,3) node[anchor=north west] {$W^{s_1,p_1}$};};
{\draw (-0.003947963249068506,2.1383933303839013) node[anchor=north west] {$C^{s_1-\frac{d}{p_1}}$};};
{\draw (0.1857358765244479,0) node[anchor=north west] {$L^{p_1}$};};
{\draw [color=blauclar] (0.9,0.93)-- (0.59,0.);};
{\draw [color=blauclar] (0.59,0.)-- (0.9,0.);};
{\draw [color=blauclar] (0.9,0.)-- (0.9,0.93);};
{\draw (0.7790023541139567,1.6742732656651211) node[anchor=north west] {$W^{s_2,p_2}$};};
{\draw (0.45,0.1) node[anchor=north west] {$L^{(p_2)^*_{s_2}}$};};
{\draw (0.8395397497863556,0) node[anchor=north west] {$L^{p_2}$};};
\begin{scriptsize}
{\draw [color=grisfosc] (0.27,2.58)-- ++(-1.5pt,-1.5pt) -- ++(3.0pt,3.0pt) ++(-3.0pt,0) -- ++(3.0pt,-3.0pt);};
{\draw [color=grisfosc] (0.9,0.93)-- ++(-1.5pt,-1.5pt) -- ++(3.0pt,3.0pt) ++(-3.0pt,0) -- ++(3.0pt,-3.0pt);};
{\draw [color=grisfosc] (0.27,0.)-- ++(-1.5pt,-1.5pt) -- ++(3.0pt,3.0pt) ++(-3.0pt,0) -- ++(3.0pt,-3.0pt);};
{\draw [color=grisfosc] (0.9,0.)-- ++(-1.5pt,-1.5pt) -- ++(3.0pt,3.0pt) ++(-3.0pt,0) -- ++(3.0pt,-3.0pt);};
{\draw [color=grisfosc] (0.59,0.)-- ++(-1.5pt,-1.5pt) -- ++(3.0pt,3.0pt) ++(-3.0pt,0) -- ++(3.0pt,-3.0pt);};
{\draw [color=grisfosc] (0.,1.77)-- ++(-1.5pt,-1.5pt) -- ++(3.0pt,3.0pt) ++(-3.0pt,0) -- ++(3.0pt,-3.0pt);};
\end{scriptsize}
\end{tikzpicture}
\caption{General embeddings for Sobolev  spaces (and Triebel-Lizorkin spaces with $q$ fixed) in dimension $d=3$ (see  \rf{eqTrivialEmbedding}, \rf{eqSobolevEmbedding} and subsequent embeddings).}\label{figEmbeddingsGeneral}
\end{figure}

Regarding classical spaces, whenever $0< p < \infty$ and $0< q< \infty$, the following holds true:
\begin{itemize}
\item If $s>\frac dp$ (supercritical case) then $F^s_{p,q}\subset \mathcal{C}^{s-\frac{d}{p}}\cap h^p$.
\item If $s=\frac dp$ (critical case), then $F^s_{p,q}\subset VMO\cap h^p$.
\item If $\frac dp -d <s<\frac dp$ (subcritical case), then $F^s_{p,q}\subset L^{(p)^*_s}\cap h^p$ where $\frac{1}{(p)^*_s}=\frac{1}{p}-\frac{s}{d}<1$. 
\end{itemize}


\subsection{Compactly supported functions}\label{secCompactlySupported}
Let us assume that $\mu \in W^{s,p}$  is compactly supported with $p>1$. Since $\mu \in L^{(p)^*_s}$ (note that one can do the same if $\mu \in F^s_{p,q}$ when $s-\frac{d}{p}>-d$ as we noted above), which coincides with the Hardy space $h_p$, also $\mu \in h_r$ for $0<r<p$ (see \cite[Section 2.2.2]{TriebelTheory}). By interpolation, we have that $\mu \in W^{\sigma,r}$ as well for $\sigma<s$ and $1<r\leq p$. Combined with the embeddings described in \rf{eqTrivialEmbedding} and \rf{eqSobolevEmbedding}, this gives us an almost complete picture of the spaces where $\mu$ does belong. It remains to see what happens in the endpoint $\sigma = s$ and in the remaining spaces of the scale $F^s_{p,q}$. If $s$ is a natural number, then the derivatives of order $s$ are compactly supported as well and it follows that $\mu\in W^{s,r}$ for every $0<r\leq p$, but in case $s$ is not a natural number, some extra work needs to be done. Since  the author does not know any mention in the literature about this case, it is studied in this section. 

We will argue using expressions in terms of differences.  Let us write
$\Delta_h^1 f(x):=f(x+h)-f(x)$ and, if $M\in\N$ with $M>1$ we define the $M$-th iterated difference as $\Delta_h^Mf(x):=\Delta_h^1 (\Delta_h^{M-1} f)(x) = \sum_{j=0}^M{M\choose j} (-1)^{M-j} f(x+jh)$. 

\begin{lemma}\label{lemCompact}
Let $d\in\N$, $0<s<\infty$, $\frac{d}{d+s}<p_1 \leq p_0<\infty$ and $\frac{d}{d+s}<q\leq\infty$.  For every compactly supported function $f\in F^s_{p_0,q}$, we have that $f\in F^s_{p_1,q}$.
\end{lemma}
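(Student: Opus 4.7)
The plan is to use the characterization of $F^s_{p,q}$ in terms of iterated differences, exploiting that the resulting square-function-type quantity inherits the compact support of $f$; the inclusion $L^{p_0}(K)\hookrightarrow L^{p_1}(K)$ on any fixed compact $K$ will then close the argument via H\"older's inequality. Concretely, I would fix an integer $M>s$ and invoke the standard equivalence
$$\norm{f}_{F^s_{p,q}} \approx \norm{f}_{L^p} + \norm{\left( \int_{|h|\leq 1} \frac{|\Delta_h^M f|^q}{|h|^{d+sq}}\, dh\right)^{1/q}}_{L^p},$$
valid precisely when $s>d\max(0, 1/p-1, 1/q-1)$, i.e.\ when $p, q>d/(d+s)$ (see \cite[Sections~2.5.10--11]{TriebelTheory}); for $q=\infty$ the integral over $h$ is replaced by an essential supremum. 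Crucially, this hypothesis is exactly what appears in the statement of the lemma for both $(p_0,q)$ and $(p_1,q)$.

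Next, assuming $\supp f\subset B(0,R)$ and noting that $\Delta_h^M f(x)=\sum_{j=0}^M \binom{M}{j}(-1)^{M-j} f(x+jh)$ vanishes unless $x+jh\in B(0,R)$ for some $j\in\{0,\ldots,M\}$, I would observe that $\supp \Delta_h^M f\subset B(0, R+M|h|)$. Hence, for $|h|\leq 1$, the function
$$g(x) := \left( \int_{|h|\leq 1} \frac{|\Delta_h^M f(x)|^q}{|h|^{d+sq}}\, dh \right)^{1/q}$$
is supported in the fixed ball $B':=B(0,R+M)$. Since $f\in F^s_{p_0,q}$ implies $g\in L^{p_0}$ and $|B'|<\infty$, H\"older's inequality gives $g\in L^{p_1}$ for every $p_1\leq p_0$.

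For the remaining $L^{p_1}$ term in the equivalent norm of $f$, I would appeal to the trichotomy recalled at the end of Section~\ref{secSpaces}: in the subcritical regime $f\in L^{(p_0)^*_s}$ with $(p_0)^*_s\geq p_0$, while in the critical and supercritical regimes $f$ lies in $VMO\cap h^{p_0}$ or in $\mathcal{C}^{s-d/p_0}$, hence locally in every $L^r$. Combined with the compact support of $f$, H\"older's inequality yields $f\in L^{p_1}$ in all cases, so both pieces of the equivalent norm are finite and $f\in F^s_{p_1,q}$.

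The main nuisance I foresee is verifying that the hypothesis $p,q>d/(d+s)$ of the lemma matches exactly the range $s>\sigma_{p,q}:=d\max(0,1/p-1,1/q-1)$ for which the difference characterization is known in the literature, in particular at the endpoint $q=\infty$ and when $p_1$ approaches $d/(d+s)$; everything else reduces to H\"older on a set of finite measure.
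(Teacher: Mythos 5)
Your proposal follows essentially the same route as the paper: replace the Fourier-analytic definition of $F^s_{p,q}$ by a characterization in terms of $M$-th differences with $M>s$, observe that the resulting square-function-type quantity inherits a fixed compact support $\DDD_{R+M}$, and conclude by H\"older's inequality on a set of finite measure. The support computation for $\Delta_h^M f$ and the treatment of the zero-order term (using $F^s_{p_0,q}\subset L^{(p_0)^*_s}$ with $(p_0)^*_s>1$ precisely because $p_0>\frac{d}{d+s}$, which also secures local integrability when $p_0<1$) are correct and match what the paper needs.

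The one soft spot is exactly the ``nuisance'' you flag yourself: you invoke the \emph{plain} difference characterization
$\norm{f}_{L^p}+\bigl\|\bigl(\int_{|h|\leq 1}|h|^{-sq-d}|\Delta_h^M f|^q\,dh\bigr)^{1/q}\bigr\|_{L^p}$
and assert it is valid precisely for $p,q>\frac{d}{d+s}$, citing \cite[Sections 2.5.10--11]{TriebelTheory}. That is not what those sections deliver: the classical statement there carries more restrictive hypotheses on $s$ in terms of $\min(p,q)$, and the sharp parameter range for the non-averaged difference characterization of $F$-spaces is a genuinely delicate question that you cannot simply read off. The paper sidesteps this entirely by using the \emph{ball means} of differences, i.e.\ the quantity $w_q^M f$ built from $d_t^Mf(x)=t^{-d}\int_{|h|\leq t}|\Delta_h^Mf(x)|\,dh$, for which \cite[Theorem 1.116]{TriebelTheoryIII} gives the equivalence $\norm{g}_{F^s_{p,q}}\approx\norm{g}_{L^{p}}+\norm{g}_{L^{\max\{1,p\}}}+\norm{w_q^Mg}_{L^{p}}$ exactly under the hypothesis $\frac{d}{\min\{p_0,p_1,q\}}-d<s$ of the lemma; the averaged quantity has the same compact support property, so your H\"older argument goes through verbatim. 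So: same idea, but to make it airtight you should switch to the averaged characterization (or supply a reference covering the plain-difference version in the full claimed range), and note that for $p_1<1$ the extra $L^{\max\{1,p_1\}}$ term is handled by the same embedding-plus-H\"older step you already describe.
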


\begin{proof}
Let $f\in F^s_{p_0,q}$ be given with compact support in $\DDD_R$. Choose a natural number $M>s$.  Clearly 
$$d_{t}^M f(x):=t^{-d}\int_{|h|\leq t}|\Delta_h^M f(x)|\, dh ,$$
is compactly supported in the disk $\DDD_{R+M}$ for $t\leq 1$,  and so is 
$$w_{q}^M f(x):=\left(\int_0^1  \frac{d_{t}^M f(x)^q}{t^{sq+1}} \, dt \right)^{\frac{1}{q}},$$
(with the usual modifications for $q=\infty$). 

By hypothesis we have $\frac{d}{\min\{p_0,p_1,q\}}-d<s$. By \cite[Theorem 1.116]{TriebelTheoryIII},  writing $\overline{p_{j}}:=\max\{1,p_{j}\}$ for every locally integrable function $g$ we have that
\begin{equation}\label{eqTriebelDifference}
\norm{g}_{F^s_{p_{j},q}(\R^d)}\approx \norm{g}_{L^{p_{j}}}+ \norm{g}_{L^{\overline{p_{j}}}} + \norm{w_{q}^M g}_{L^{p_j}} .
\end{equation}

Now, the norm in \rf{eqTriebelDifference} and the H\"older inequality grant that
 \begin{align*}
 \norm{f}_{F^s_{p_1,q}}
 	& \approx \norm{f}_{L^{\overline{p_1}}(\DDD_R)}+ \norm{f}_{L^{p_1}(\DDD_R)} + \norm{w_{q}^M f}_{L^{p_1}(\DDD_{R+M})} \\
 	& \lesssim_R  \norm{f}_{L^{\overline{p_0}}} + \norm{f}_{L^{p_0}} + \norm{w_{q}^M f}_{L^{p_0}} 	
		\approx   \norm{f}_{F^s_{p_0,q}}.
\end{align*}
\end{proof}
\begin{remark}
Note that the one can obtain analogous results for Besov spaces using also norms in terms of differences, with the more general assumption $0<q\leq \infty$, $\frac{d}{d+s}<p_1 \leq p_0 \leq \infty$. The norm in \cite[(2.5.12.4)]{TriebelTheory} by replacing $\int_{\R^d}$ by $\int_{|h|\leq 1}$, for instance, will do the job.
\end{remark}

\begin{figure}[ht]
\centering
\subfigure[Embeddings for compactly supported functions.]
{
\begin{tikzpicture}[line cap=round,line join=round,>=triangle 45,x=3.0cm,y=1.12cm]
\draw[->,color=grisclar] (-0.12,0.) -- (2.1,0.);
\foreach \x in {0,0.25,0.5,0.75,1.}
\draw[shift={(\x,0)},color=grisclar] (0pt,2pt) -- (0pt,-2pt);
 { \draw[->,color=grisclar] (0.,-0.88) -- (0.,4.1);};
 {\foreach \y in {,1.,2.,3.,4.}\draw[shift={(0,\y)},color=grisclar] (2.1,0.) -- (-2pt,0.);};
{\clip(-0.22,-0.88) rectangle (2.1,4.1);};
{\fill[color=vermell,fill=vermell,fill opacity=0.5] (0.,0.) -- (1.,0.) -- (2.1,2.2) -- (2.1,4.1) -- (2.05,4.1) -- cycle;};
{\fill[color=verd,fill=verd,fill opacity=0.5] (0.,0.) -- (2.05,4.1) -- (0.,4.1) -- cycle;};
{\fill[color=blauclar,fill=blauclar,fill opacity=0.1] (0.3,1.9)-- (0.,1.2) -- (0.,0.) -- (2.1,0.) -- (2.1,1.9) -- cycle;};
{\fill[color=blauclar,fill=blauclar,fill opacity=0.1]  (0.9,0.62) -- (0.59,0.)-- (2.1,0.) -- (2.1,0.62) -- cycle;};
{\draw (1.9,-0.1) node[anchor=north west] {$\frac{1}{p}$};};
{\draw [dash pattern=on 2pt off 2pt,domain=-0.12:2.05] plot(\x,{(-0.--2.*\x)/1.});};
{\draw [dash pattern=on 2pt off 2pt,domain=0.88:2.1] plot(\x,{(-2.--2.*\x)/1.});};
{\draw [color=grisclar] (1.,-0.88) -- (1.,4.1);};
{\draw (-0.18,4.1) node[anchor=north west] {$s$};};
{\draw (1,-0.3) node[anchor=north west] {$p=1$};};
{\draw (-0.17,-0.3) node[anchor=north west] {$p=\infty$};};
{\draw (1.5,4.) node[anchor=north west] {$s=\frac{d}{p}$};};
{\draw (1.5,2.5) node[anchor=north west] {$s=\frac{d}{p}-d$};};
{\draw (-0.18466449081226698,0.6249583367357056) node[anchor=north west] {$0$};};
{\draw (-0.18466449081226698,2.4814385956108254) node[anchor=north west] {$2$};};
{\draw [color=blauclar] (0.3,1.9)-- (0.,1.2);};
{\draw [color=blauclar] (0.,1.2)-- (0.,0.);};
{\draw [color=blauclar] (0.,0.)-- (2.1,0.);};
{\draw [color=blauclar] (1.95,1.9)-- (0.3,1.9);};
{\draw (0.28,2.5) node[anchor=north west] {$W^{s_1,p_1}$};};
{\draw (0.,1.5) node[anchor=north west] {$C^{s_1-\frac{d}{p_1}}$};};
{\draw [color=blauclar] (0.9,0.62)-- (0.59,0.);};
{\draw [color=blauclar] (0.59,0.)-- (2.1,0.);};
{\draw [color=blauclar] (1.31,0.62)-- (0.9,0.62);};
{\draw (0.77,1.) node[anchor=north west] {$W^{s_2,p_2}$};};
{\draw (0.45,0.1) node[anchor=north west] {$L^{(p_2)^*_{s_2}}$};};
\begin{scriptsize}
{\draw [color=grisfosc] (0.3,1.9)-- ++(-1.5pt,-1.5pt) -- ++(3.0pt,3.0pt) ++(-3.0pt,0) -- ++(3.0pt,-3.0pt);};
{\draw [color=grisfosc] (0.9,0.62)-- ++(-1.5pt,-1.5pt) -- ++(3.0pt,3.0pt) ++(-3.0pt,0) -- ++(3.0pt,-3.0pt);};
{\draw [color=grisfosc] (0.59,0.)-- ++(-1.5pt,-1.5pt) -- ++(3.0pt,3.0pt) ++(-3.0pt,0) -- ++(3.0pt,-3.0pt);};
{\draw [color=grisfosc] (0.,1.2)-- ++(-1.5pt,-1.5pt) -- ++(3.0pt,3.0pt) ++(-3.0pt,0) -- ++(3.0pt,-3.0pt);};
\end{scriptsize}
\end{tikzpicture}
}\,\,
\subfigure[Embeddings for bounded and compactly supported functions.]
{
\begin{tikzpicture}[line cap=round,line join=round,>=triangle 45,x=3.0cm,y=1.12cm]
\draw[->,color=grisclar] (-0.12,0.) -- (2.1,0.);
\foreach \x in {0,0.25,0.5,0.75,1.}
\draw[shift={(\x,0)},color=grisclar] (0pt,2pt) -- (0pt,-2pt);
 { \draw[->,color=grisclar] (0.,-0.88) -- (0.,4.1);};
 {\foreach \y in {,1.,2.,3.,4.}\draw[shift={(0,\y)},color=grisclar] (2.1,0.) -- (-2pt,0.);};
{\clip(-0.22,-0.88) rectangle (2.1,4.1);};
{\fill[color=vermell,fill=vermell,fill opacity=0.5] (0.,0.) -- (1.,0.) -- (2.1,2.2) -- (2.1,4.1) -- (2.05,4.1) -- cycle;};
{\fill[color=verd,fill=verd,fill opacity=0.5] (0.,0.) -- (2.05,4.1) -- (0.,4.1) -- cycle;};
{\fill[color=blauclar,fill=blauclar,fill opacity=0.1] (0.3,1.9)-- (0.,1.2) -- (0.,0.) -- (2.1,0.) -- (2.1,1.9) -- cycle;};
{\fill[color=blauclar,fill=blauclar,fill opacity=0.1]  (0.9,0.62) -- (0.,0.)-- (2.1,0.) -- (2.1,0.62) -- cycle;};
{\draw (1.9,-0.1) node[anchor=north west] {$\frac{1}{p}$};};
{\draw [dash pattern=on 2pt off 2pt,domain=-0.12:2.05] plot(\x,{(-0.--2.*\x)/1.});};
{\draw [dash pattern=on 2pt off 2pt,domain=0.88:2.1] plot(\x,{(-2.--2.*\x)/1.});};
{\draw [color=grisclar] (1.,-0.88) -- (1.,4.1);};
{\draw (-0.18,4.1) node[anchor=north west] {$s$};};
{\draw (1,-0.3) node[anchor=north west] {$p=1$};};
{\draw (-0.17,-0.3) node[anchor=north west] {$p=\infty$};};
{\draw (1.5,4.) node[anchor=north west] {$s=\frac{d}{p}$};};
{\draw (1.5,2.5) node[anchor=north west] {$s=\frac{d}{p}-d$};};
{\draw (-0.18466449081226698,0.6249583367357056) node[anchor=north west] {$0$};};
{\draw (-0.18466449081226698,2.4814385956108254) node[anchor=north west] {$2$};};
{\draw [color=blauclar] (0.3,1.9)-- (0.,1.2);};
{\draw [color=blauclar] (0.,1.2)-- (0.,0.);};
{\draw [color=blauclar] (0.,0.)-- (2.1,0.);};
{\draw [color=blauclar] (1.95,1.9)-- (0.3,1.9);};
{\draw (0.28,2.5) node[anchor=north west] {$W^{s_1,p_1}$};};
{\draw (0.,1.5) node[anchor=north west] {$C^{s_1-\frac{d}{p_1}}$};};
{\draw [color=blauclar] (0.9,0.62)-- (0,0.);};
{\draw [color=blauclar] (0,0.)-- (2.1,0.);};
{\draw [color=blauclar] (1.31,0.62)-- (0.9,0.62);};
{\draw (0.77,1.) node[anchor=north west] {$W^{s_2,p_2}$};};
{\draw (0.,0.) node[anchor=north west] {$L^{\infty}$};};
\begin{scriptsize}
{\draw [color=grisfosc] (0.3,1.9)-- ++(-1.5pt,-1.5pt) -- ++(3.0pt,3.0pt) ++(-3.0pt,0) -- ++(3.0pt,-3.0pt);};
{\draw [color=grisfosc] (0.9,0.62)-- ++(-1.5pt,-1.5pt) -- ++(3.0pt,3.0pt) ++(-3.0pt,0) -- ++(3.0pt,-3.0pt);};
{\draw [color=grisfosc] (0.,0.)-- ++(-1.5pt,-1.5pt) -- ++(3.0pt,3.0pt) ++(-3.0pt,0) -- ++(3.0pt,-3.0pt);};
{\draw [color=grisfosc] (0.,1.2)-- ++(-1.5pt,-1.5pt) -- ++(3.0pt,3.0pt) ++(-3.0pt,0) -- ++(3.0pt,-3.0pt);};
\end{scriptsize}
\end{tikzpicture}
}
\caption{Embeddings for compactly supported or bounded functions.}\label{figSpecialEmbeddings}
\end{figure}
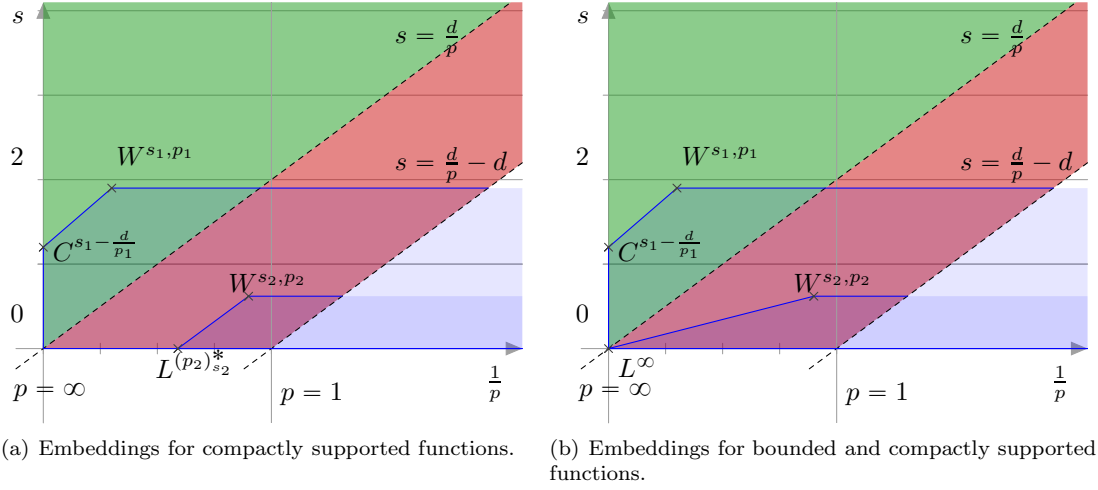

Finally let us compile all the information regarding functions such as the Beltrami coefficients involved in Theorem \ref{theoRegularityCoefficients} (see Figure \ref{figSpecialEmbeddings} (b)). In this case, we can interpolate norms with $L^{\infty}$ by \cite[Theorem 2.2.5]{RunstSickel}: Let  $0<s_2 < s<\frac dp$, $\frac{d}{d+s} < p_1 \leq  p <\infty$ and $0<p_2<\infty$, $0<q\leq q_1 \leq\infty$ with $\frac{d}{d+s}<q_1$ and $0<q_2\leq\infty$. For every compactly supported function $\mu\in F^s_{p,q}\cap L^\infty$ we have that 
\begin{equation}\label{eqInterpolateLInfty}
\mu \in F^{s}_{p_1,q_1} \cap F^{s_2}_{p_2,q_2}  \mbox{\quad\quad as long as \quad\quad} s_2 p_2 \leq sp .
\end{equation}

\subsection{Discussion on the former regularity results}\label{secPreviousResults}
\begin{figure}[ht]
\centering
\subfigure[{General case from \cite{Astala}},  for $q<p_\kappa$, $\norm{\mu}_{L^\infty} \leq \kappa \implies  f \in W^{1,q}_{loc}$. ]
{
\begin{tikzpicture}[line cap=round,line join=round,>=triangle 45,x=3.0cm,y=1.12cm]
\draw[->,color=grisclar] (-0.12,0.) -- (1.1,0.);
\foreach \x in {0,0.25,0.5,0.75,1.}
\draw[shift={(\x,0)},color=grisclar] (0pt,2pt) -- (0pt,-2pt);
 { \draw[->,color=grisclar] (0.,-0.88) -- (0.,3.1);};
 {\foreach \y in {,1.,2.,3.}\draw[shift={(0,\y)},color=grisclar] (1.1,0.) -- (-2pt,0.);};
{\clip(-0.22,-0.88) rectangle (1.1,3.1);};
{\draw (1.0,-0.1) node[anchor=north west] {$\frac{1}{p}$};};
{\draw [color=grisclar, dash pattern=on 2pt off 2pt,domain=-0.12:1.1] plot(\x,{(-0.--2.*\x)/1.});};
{\draw [color=grisclar] (1.,-0.88) -- (1.,3.1);};
{\draw [color=negre, dash pattern=on 1pt off 3pt] (0.3,0) -- (0.3,1);};
{\draw (-0.18,3.1) node[anchor=north west] {$s$};};
{\node [circle, draw=negre, thick,inner sep=0pt, minimum size=1pt] (1) at (0.7,0) {$\,$};};
{\draw (0.43,-0.07) node[anchor=north east] {$\frac{1}{p_\kappa}$};};
{\node [circle, draw=negre, thick,inner sep=0pt, minimum size=1pt] (2) at (0.3,0) {$\,$};};
{\draw (0.83,-0.07) node[anchor=north east] {$\frac{1}{p_\kappa'}$};};
{\draw [color=grisfosc] (0.7,3) node[anchor=north west] {$W^{s,p}_{loc}$};};
{\draw (1.12,-0.1) node[anchor=north west] {$\frac{1}{p}$};};
{\draw[color=red] (0.3,1.) node[anchor=south west] {$f$};};
{\fill[color=red,fill=vermell,fill opacity=0.5] (0.,0.) -- (1.,0.) -- (1.,1.) -- (0.3,1.) -- (0,0.4) -- cycle;};
{\draw[color=red] (0,0.4) -- (0.,0.) -- (1.,0.) -- (1.,1.) -- (0.3,1.) ;};
{\draw[color=red, dash pattern=on 2pt off 2pt] (0,0.4) -- (0.3,1.) ;};
{\draw [color=blau] (0,0) node[anchor=north west] {$\mu$};};
{\draw [color=blau, thick] (0,0) -- (1,0);};
{\draw [color= blauverd] (0.25,0.5) node[anchor=north west] {$\bar\partial f$};};
{\draw [color= blauverd, thick] (0.3,-0.025) -- (1,-0.025);};
\begin{scriptsize}
{\node [circle, draw=red,inner sep=0pt, minimum size=2pt] (1) at (0.3,1) {$\,$};};
{\draw [color=blau, thick] (0,0.)-- ++(-1.5pt,-1.5pt) -- ++(3.0pt,3.0pt) ++(-3.0pt,0) -- ++(3.0pt,-3.0pt);};
{\node [circle, fill=white, inner sep=0pt, minimum size=2pt] (1) at (0.3,0) {$\,$};};
{\node [circle, draw=blauverd,fill=vermell,fill opacity=0.5, inner sep=0pt, minimum size=2pt] (1) at (0.3,0) {$\,$};};
\end{scriptsize}
\end{tikzpicture}
}\,\,
\subfigure[From \cite{Iwaniec},  if  $1\leq q<\infty$, $\mu \in VMO \implies  f \in W^{1,q}_{loc}$.]
{
\begin{tikzpicture}[line cap=round,line join=round,>=triangle 45,x=3.0cm,y=1.12cm]
\draw[->,color=grisclar] (-0.12,0.) -- (1.1,0.);
\foreach \x in {0,0.25,0.5,0.75,1.}
\draw[shift={(\x,0)},color=grisclar] (0pt,2pt) -- (0pt,-2pt);
 { \draw[->,color=grisclar] (0.,-0.88) -- (0.,3.1);};
 {\foreach \y in {,1.,2.,3.}\draw[shift={(0,\y)},color=grisclar] (1.1,0.) -- (-2pt,0.);};
{\clip(-0.22,-0.88) rectangle (1.1,3.1);};
{\draw (1.0,-0.1) node[anchor=north west] {$\frac{1}{p}$};};
{\draw [color=grisclar, dash pattern=on 2pt off 2pt,domain=-0.12:1.1] plot(\x,{(-0.--2.*\x)/1.});};
{\draw [color=grisclar] (1.,-0.88) -- (1.,3.1);};
{\draw (-0.18,3.1) node[anchor=north west] {$s$};};
{\node [circle, draw=negre, thick,inner sep=0pt, minimum size=1pt] (1) at (0.7,0) {$\,$};};
{\draw (0.43,-0.07) node[anchor=north east] {$\frac{1}{p_\kappa}$};};
{\node [circle, draw=negre, thick,inner sep=0pt, minimum size=1pt] (2) at (0.3,0) {$\,$};};
{\draw (0.83,-0.07) node[anchor=north east] {$\frac{1}{p_\kappa'}$};};
{\draw [color=grisfosc] (0.7,3) node[anchor=north west] {$W^{s,p}_{loc}$};};
{\draw (1.12,-0.1) node[anchor=north west] {$\frac{1}{p}$};};
{\draw[color=red] (0,1.) node[anchor=south west] {$f$};};
{\fill[color=red,fill=vermell,fill opacity=0.5] (0.,0.) -- (1.,0.) -- (1.,1.) -- (0,1.) --cycle;};
{\draw[color=red] (0,0.4) -- (0.,0.) -- (1.,0.) -- (1.,1.) -- (0.,1.) -- cycle;};
{\draw [color=blau] (0,0) node[anchor=north west] {$\mu$};};
{\node [circle, draw=red,inner sep=0pt, minimum size=2pt] (1) at (0,1) {$\,$};};
{\draw [color=blau, thick] (0,0) -- (1,0);};
{\draw [color= blauverd] (0.,0.) node[anchor=south west] {$\bar\partial f$};};
{\draw [color= blauverd, thick] (0,-0.025) -- (1,-0.025);};
\begin{scriptsize}
{\draw [color=blau, thick] (0,0.)-- ++(-1.5pt,-1.5pt) -- ++(3.0pt,3.0pt) ++(-3.0pt,0) -- ++(3.0pt,-3.0pt);};
{\node [circle, draw=blauverd,inner sep=0pt, minimum size=2pt] (1) at (0,0) {$\,$};};
{\node [circle, draw=red,inner sep=0pt, minimum size=2pt] (1) at (0,1) {$\,$};};
\end{scriptsize}
\end{tikzpicture}
}\,\,
\subfigure[{From \cite[Chapter 15]{AstalaIwaniecMartin})}, $\mu \in C^s \implies f \in C^{s+1}_{loc}$.]
{
\begin{tikzpicture}[line cap=round,line join=round,>=triangle 45,x=3.0cm,y=1.12cm]
\draw[->,color=grisclar] (-0.12,0.) -- (1.1,0.);
\foreach \x in {0,0.25,0.5,0.75,1.}
\draw[shift={(\x,0)},color=grisclar] (0pt,2pt) -- (0pt,-2pt);
 { \draw[->,color=grisclar] (0.,-0.88) -- (0.,3.1);};
 {\foreach \y in {,1.,2.,3.}\draw[shift={(0,\y)},color=grisclar] (1.1,0.) -- (-2pt,0.);};
{\clip(-0.22,-0.88) rectangle (1.1,3.1);};
{\draw (1.0,-0.1) node[anchor=north west] {$\frac{1}{p}$};};
{\draw [color=grisclar, dash pattern=on 2pt off 2pt,domain=-0.12:1.1] plot(\x,{(-0.--2.*\x)/1.});};
{\draw [color=grisclar] (1.,-0.88) -- (1.,3.1);};
{\draw (-0.18,3.1) node[anchor=north west] {$s$};};
{\node [circle, draw=negre, thick,inner sep=0pt, minimum size=1pt] (1) at (0.7,0) {$\,$};};
{\draw (0.43,-0.07) node[anchor=north east] {$\frac{1}{p_\kappa}$};};
{\node [circle, draw=negre, thick,inner sep=0pt, minimum size=1pt] (2) at (0.3,0) {$\,$};};
{\draw (0.83,-0.07) node[anchor=north east] {$\frac{1}{p_\kappa'}$};};
{\draw [color=grisfosc] (0.5,3) node[anchor=north west] {$B^{s}_{\infty,\infty,loc}$};};
{\draw (1.12,-0.1) node[anchor=north west] {$\frac{1}{p}$};};
{\draw[color=red] (0,2.4) node[anchor=south west] {$f$};};
{\fill[color=red,fill=vermell,fill opacity=0.5] (0.,0.) -- (1.,0.) -- (1.,2.4) -- (0,2.4) -- cycle;};
{\draw[color=red] (0.,0.) -- (1.,0.) -- (1.,2.4) -- (0,2.4) -- cycle;};
{\draw [color=blau] (0,1.4) node[anchor=north east] {$\mu$};};
{\draw[color= blau] (0.,0.) -- (1.,0.) -- (1.,1.4) -- (0,1.4) -- cycle;};
{\draw [color= blauverd] (0,1.4) node[anchor=south east] {$\bar\partial f$};};
{\draw [color= blauverd, thick] (0,1.375) -- (1,1.375);};
{\draw [color= blauverd,  dash pattern=on 2pt off 2pt, thick] (0.0075,0) -- (0.0075,1.375);};
\begin{scriptsize}
{\draw [color=blau, thick] (0,1.4)-- ++(-1.5pt,-1.5pt) -- ++(3.0pt,3.0pt) ++(-3.0pt,0) -- ++(3.0pt,-3.0pt);};
{\draw [color=blauverd]  (0,1.4)-- ++(-1pt,-1pt) -- ++(2.0pt,2.0pt) ++(-2.0pt,0) -- ++(2.0pt,-2.0pt);};
{\draw [color=red] (0,2.4)-- ++(-1.5pt,-1.5pt) -- ++(3.0pt,3.0pt) ++(-3.0pt,0) -- ++(3.0pt,-3.0pt);};
\end{scriptsize}
\end{tikzpicture}
}

\subfigure[From \cite{ClopFaracoMateuOrobitgZhong}, if $p_\kappa'<p<2$, $\mu\in W^{1,p} \implies \bar\partial f \in W^{1,q}$, when $q<<p$.]
{
\begin{tikzpicture}[line cap=round,line join=round,>=triangle 45,x=3.0cm,y=1.12cm]
\draw[->,color=grisclar] (-0.12,0.) -- (1.1,0.);
\foreach \x in {0,0.25,0.5,0.75,1.}
\draw[shift={(\x,0)},color=grisclar] (0pt,2pt) -- (0pt,-2pt);
 { \draw[->,color=grisclar] (0.,-0.88) -- (0.,3.1);};
 {\foreach \y in {,1.,2.,3.}\draw[shift={(0,\y)},color=grisclar] (1.1,0.) -- (-2pt,0.);};
{\clip(-0.22,-0.88) rectangle (1.1,3.1);};
{\draw (1.0,-0.1) node[anchor=north west] {$\frac{1}{p}$};};
{\draw [color=grisclar, dash pattern=on 2pt off 2pt,domain=-0.12:1.1] plot(\x,{(-0.--2.*\x)/1.});};
{\draw [color=negre, dash pattern=on 1pt off 3pt] (0.3,0) -- (0.3,1);};
{\draw [color=negre, dash pattern=on 1pt off 3pt] (0.7,0) -- (0.7,1);};
{\draw [color=grisclar] (1.,-0.88) -- (1.,3.1);};
{\draw (-0.18,3.1) node[anchor=north west] {$s$};};
{\node [circle, draw=negre, thick,inner sep=0pt, minimum size=1pt] (1) at (0.7,0) {$\,$};};
{\draw (0.43,-0.07) node[anchor=north east] {$\frac{1}{p_\kappa}$};};
{\node [circle, draw=negre, thick,inner sep=0pt, minimum size=1pt] (2) at (0.3,0) {$\,$};};
{\draw (0.83,-0.07) node[anchor=north east] {$\frac{1}{p_\kappa'}$};};
{\draw [color=grisfosc] (0.7,3) node[anchor=north west] {$W^{s,p}_{loc}$};};
{\draw (1.12,-0.1) node[anchor=north west] {$\frac{1}{p}$};};
{\draw[color=red] (0.95,2) node[anchor=south west] {$f$};};
{\fill[color=red,fill=vermell,fill opacity=0.5] (0.,0.) -- (1.,0.) -- (1.,2) -- (0.95,2) -- (0.3,1) -- (0,0.4)-- cycle;};
{\draw[color=red] (0,0.4) -- (0.,0.) -- (1.,0.) -- (1.,2) -- (0.95,2);};
{\draw[color=red, dash pattern=on 2pt off 2pt] (0,0.4) -- (0.3,1) -- (0.95,2.) ;};
{\draw [color=blau] (0.65,1) node[anchor=south east] {$\mu$};};
{\draw[color= blau] (0.,0.) -- (1.,0.) -- (1.,1) -- (0.65,1)  -- cycle;};
{\draw [color= blauverd] (0.8,1) node[anchor=south west] {$\bar\partial f$};};
{\draw[color=blauverd, dash pattern=on 1pt off 3pt] (0.3,0) -- (0.95,1.) ;};
{\draw [color= blauverd, thick] (0.95,0.975) -- (1,0.975);};
\begin{scriptsize}
{\draw [color=blau, thick] (0.65,1)-- ++(-1.5pt,-1.5pt) -- ++(3.0pt,3.0pt) ++(-3.0pt,0) -- ++(3.0pt,-3.0pt);};
{\node [circle, draw=blauverd,inner sep=0pt, minimum size=2pt] (1) at (0.95,1) {$\,$};};
{\node [circle, draw=red,inner sep=0pt, minimum size=2pt] (1) at (0.95,2) {$\,$};};
{\node [circle, draw=red,inner sep=0pt, minimum size=2pt] (1) at (0.3,1) {$\,$};};
\end{scriptsize}
\end{tikzpicture}
}\,\,
\subfigure[From \cite{ClopFaracoMateuOrobitgZhong}, if $p=2$, $\mu\in W^{1,p} \implies \bar\partial f \in W^{1,q}$ when $q<p$.]
{
\begin{tikzpicture}[line cap=round,line join=round,>=triangle 45,x=3.0cm,y=1.12cm]
\draw[->,color=grisclar] (-0.12,0.) -- (1.1,0.);
\foreach \x in {0,0.25,0.5,0.75,1.}
\draw[shift={(\x,0)},color=grisclar] (0pt,2pt) -- (0pt,-2pt);
 { \draw[->,color=grisclar] (0.,-0.88) -- (0.,3.1);};
 {\foreach \y in {,1.,2.,3.}\draw[shift={(0,\y)},color=grisclar] (1.1,0.) -- (-2pt,0.);};
{\clip(-0.22,-0.88) rectangle (1.1,3.1);};
{\draw (1.0,-0.1) node[anchor=north west] {$\frac{1}{p}$};};
{\draw [color=grisclar, dash pattern=on 2pt off 2pt,domain=-0.12:1.1] plot(\x,{(-0.--2.*\x)/1.});};
{\draw [color=grisclar] (1.,-0.88) -- (1.,3.1);};
{\draw (-0.18,3.1) node[anchor=north west] {$s$};};
{\node [circle, draw=negre, thick,inner sep=0pt, minimum size=1pt] (1) at (0.7,0) {$\,$};};
{\draw (0.43,-0.07) node[anchor=north east] {$\frac{1}{p_\kappa}$};};
{\node [circle, draw=negre, thick,inner sep=0pt, minimum size=1pt] (2) at (0.3,0) {$\,$};};
{\draw (0.83,-0.07) node[anchor=north east] {$\frac{1}{p_\kappa'}$};};
{\draw [color=grisfosc] (0.7,3) node[anchor=north west] {$W^{s,p}_{loc}$};};
{\draw (1.12,-0.1) node[anchor=north west] {$\frac{1}{p}$};};
{\draw[color=red] (0.5,2) node[anchor=south west] {$f$};};
{\fill[color=red,fill=vermell,fill opacity=0.5] (0.,0.) -- (1.,0.) -- (1.,2) -- (0.5,2) -- (0,1) -- cycle;};
{\draw[color=red] (0,1) -- (0.,0.) -- (1.,0.) -- (1.,2) -- (0.5,2);};
{\draw[color=red, dash pattern=on 2pt off 2pt] (0,1) -- (0.5,2.) ;};
{\draw [color=blau] (0.5,1) node[anchor=south east] {$\mu$};};
{\draw[color= blau] (0.,0.) -- (1.,0.) -- (1.,1) -- (0.5,1)  -- cycle;};
{\draw [color= blauverd] (0.5,1) node[anchor=south west] {$\bar\partial f$};};
{\draw [color= blauverd, thick] (0.5,0.975) -- (1,0.975);};
{\draw [color= blauverd,  dash pattern=on 2pt off 2pt, thick] (0.0125,0) -- (0.5,0.975);};
\begin{scriptsize}
{\draw [color=blau, thick] (0.5,1)-- ++(-1.5pt,-1.5pt) -- ++(3.0pt,3.0pt) ++(-3.0pt,0) -- ++(3.0pt,-3.0pt);};
{\node [circle, draw=blauverd,inner sep=0pt, minimum size=2pt] (1) at (0.5,1) {$\,$};};
{\node [circle, draw=blauverd,inner sep=0pt, minimum size=2pt] (1) at (0.5,2) {$\,$};};
\end{scriptsize}
\end{tikzpicture}
}\,\,
\subfigure[From \cite{CruzMateuOrobitg}, if $sp>2$, $\mu\in W^{s,p} \implies \bar\partial f \in W^{s,p}$.]
{
\begin{tikzpicture}[line cap=round,line join=round,>=triangle 45,x=3.0cm,y=1.12cm]
\draw[->,color=grisclar] (-0.12,0.) -- (1.1,0.);
\foreach \x in {0,0.25,0.5,0.75,1.}
\draw[shift={(\x,0)},color=grisclar] (0pt,2pt) -- (0pt,-2pt);
 { \draw[->,color=grisclar] (0.,-0.88) -- (0.,3.1);};
 {\foreach \y in {,1.,2.,3.}\draw[shift={(0,\y)},color=grisclar] (1.1,0.) -- (-2pt,0.);};
{\clip(-0.22,-0.88) rectangle (1.1,3.1);};
{\draw (1.0,-0.1) node[anchor=north west] {$\frac{1}{p}$};};
{\draw [color=grisclar, dash pattern=on 2pt off 2pt,domain=-0.12:1.1] plot(\x,{(-0.--2.*\x)/1.});};
{\draw [color=grisclar] (1.,-0.88) -- (1.,3.1);};
{\draw (-0.18,3.1) node[anchor=north west] {$s$};};
{\node [circle, draw=negre, thick,inner sep=0pt, minimum size=1pt] (1) at (0.7,0) {$\,$};};
{\draw (0.43,-0.07) node[anchor=north east] {$\frac{1}{p_\kappa}$};};
{\node [circle, draw=negre, thick,inner sep=0pt, minimum size=1pt] (2) at (0.3,0) {$\,$};};
{\draw (0.83,-0.07) node[anchor=north east] {$\frac{1}{p_\kappa'}$};};
{\draw [color=grisfosc] (0.7,3) node[anchor=north west] {$W^{s,p}_{loc}$};};
{\draw (1.12,-0.1) node[anchor=north west] {$\frac{1}{p}$};};
{\draw[color=red] (0.5,2.5) node[anchor=south west] {$f$};};
{\fill[color=red,fill=vermell,fill opacity=0.5] (0.,0.) -- (1.,0.) -- (1.,2.5) -- (0.5,2.5) -- (0,1.5) -- cycle;};
{\draw[color=red] (0.,0.) -- (1.,0.) -- (1.,2.5) -- (0.5,2.5) -- (0,1.5) -- cycle;};
{\draw [color=blau] (0.5,1.5) node[anchor=south east] {$\mu$};};
{\draw[color= blau] (0.,0.) -- (1.,0.) -- (1.,1.5) -- (0.5,1.5) -- (0,0.5) -- cycle;};
{\draw [color= blauverd] (0.5,1.5) node[anchor=south west] {$\bar\partial f$};};
{\draw [color= blauverd, thick] (0.0075,0) -- (0.0075,0.49) -- (0.5025,1.48) -- (1,1.48);};
\begin{scriptsize}
{\draw [color=blau, thick] (0.5,1.5)-- ++(-1.5pt,-1.5pt) -- ++(3.0pt,3.0pt) ++(-3.0pt,0) -- ++(3.0pt,-3.0pt);};
{\draw [color=blauverd]  (0.5,1.5)-- ++(-1pt,-1pt) -- ++(2.0pt,2.0pt) ++(-2.0pt,0) -- ++(2.0pt,-2.0pt);};
{\draw [color=red] (0.5,2.5)-- ++(-1.5pt,-1.5pt) -- ++(3.0pt,3.0pt) ++(-3.0pt,0) -- ++(3.0pt,-3.0pt);};
\end{scriptsize}
\end{tikzpicture}
}
\caption{Regularity of the principal quasiconformal solution to \rf{eqBeltrami} when the coefficients satisfy a-priori conditions.}\label{figDiscussionResults}
\end{figure}

Kari Astala showed in \cite{Astala} that every quasiconformal mapping $f$ with Beltrami coefficient $\mu\in L^\infty$ compactly supported in the unit disk with $\norm{\mu}_{L^\infty}=\kappa<1$ satisfies that
$$\bar{\partial} f \in L^p \mbox{ \quad\quad whenever } \frac{1}{p_\kappa} < \frac1p \leq 1, $$
(see Figure \ref{figDiscussionResults}(a)). Moreover, the operator $\mathcal{H}=Id-\mu\Beurling-\nu\overline{\Beurling}$ is invertible in Lebesgue spaces with exponent on the critical range $(p_\kappa', p_{\kappa})$ as shown in \cite{AstalaIwaniecSaksman}. When the regularity of the coefficients is greater, we can expect the principal solution to \rf{eqBeltrami} to have greater regularity as well. The first result in that direction was given by Iwaniec, who could prove the compactness of the commutator $[\mu,\Beurling]$ in every $L^p$ space when $\mu$ is a Beltrami coefficient in $VMO$. By means of a Fredholm theory argument this implies the invertibility of $\mathcal{H}$ in $L^p$ when $\nu = 0$, and the general case is shown by the same argument (see Figure \ref{figDiscussionResults}(b)).

In recent years there has been a great improvement in results. A remarkable one, given by Clop et al. in \cite[Proposition 4]{ClopFaracoMateuOrobitgZhong}, deals with the case of $f$ being the principal solution to the Beltrami equation \rf{eqBeltrami} with $\mu\in W^{1,p}_{c}$ with  $1<p<\infty$ and $\nu\equiv 0$ (see Figure \ref{figDiscussionResults} (d), (e) and (f)):
\begin{itemize}
\item If $\frac1p<\frac12$, then $ \bar{\partial}f \in W^{1,p}$.
\item If $\frac1p=\frac12$, then $ \bar{\partial}f \in W^{1,q}$ for every $q<2$. 
\item If $\frac12<\frac1p<\frac1{p_\kappa'}$, then $\bar{\partial}f \in W^{1,q}$ for every $\frac{1}{q} > \frac{1}{p}+\frac{1}{p_\kappa}$. 
\end{itemize}
Notice how $W^{1,p}$ being either supercritical, critical or subcritical determines the regularity behavior of $f$. In the subcritical case, the setting $\nu\neq 0$ was studied in \cite[Corolario 3.8]{BaisonThesis}. The author shows that when $\frac12<\frac1p<\frac1{p_\kappa'}-\frac1{p_\kappa}$, then $\bar{\partial}f \in W^{1,q}$ for every $\frac{1}{q} > \frac{1}{p}+\frac{1}{p_\kappa}$. Here we observe the same phenomenon as in Theorem \ref{theoRegularityCoefficients}, that is, we need that $\kappa<\frac13$ in order to have room for $p$ in the hypothesis. 

In the supercritical case there is no loss of regularity: if $\mu \in X$, then $\bar\partial f \in X$ as well. Indeed this is the case in the H\"older spaces of fractional order (see \cite[Chapter 15]{AstalaIwaniecMartin},  Figure \ref{figDiscussionResults}(c) above) and in the whole Triebel-Lizorkin and Besov scales, as shown by Cruz, Mateu and Orobitg, see \cite[Theorem 1]{CruzMateuOrobitg},  Figure \ref{figDiscussionResults}(f). Here, the authors had to prove the invertibility of $I-\mu \Beurling$ on $F^{s}_{p,q}$ which they did using Fredholm theory following Iwaniec's scheme, since the boundedness of $\Beurling$ on Triebel-Lizorkin spaces can be deduced from \cite[Corollary 3.33]{FrazierTorresWeiss}, while in Besov spaces it is a consequence of Fourier multipliers theory and interpolation (see \cite[Section 2.6]{TriebelTheory}, for instance). In the supercritical context but restricted to domains, $\mu\in W^{s,p}(\Omega)$, there are also some positive results in \cite{CruzMateuOrobitg} and in \cite{PratsQuasiconformal}.

When $\mu$ belongs to a critical space  $X\subset VMO$, we cannot expect that $\bar\partial f$ is in $X$, but the loss is minimal. The first result in that spirit is Iwaniec's above.  This theorem has been extended to other critical spaces in \cite{ClopFaracoMateuOrobitgZhong} for $s=1$ as commented above, and in \cite[Theorem 1]{BaisonClopOrobitg} which settles the case $\frac12 \leq s<1$, finding that 
$$ \mu \in W^{s,2/s} \implies\log(\partial f) \in W^{s,2/s}.$$
This result implies \rf{eqBigTarget2} for $\frac12 \leq s<1$. Thus, the progress in Theorem \ref{theoRegularityCoefficients} for the critical setting is to cover the whole range $0<s<2$.
Finally, small improvements on the spaces lead to better results, as shown in \cite{CruzMateuOrobitg}, where the authors prove that replacing $W^{s,2/s}$ by the Riesz potential of a Lorentz space $I^s(L^{2/s,1})$, then there is no loss. In this case, the functions are continuous, so we can classify these spaces as supercritical.

In the subcritical situation, in addition to the results on classical spaces in \cite[Corollary 1.2]{Astala}, \cite[Proposition 4]{ClopFaracoMateuOrobitgZhong} and \cite[Corolario 3.8]{BaisonThesis}, the fractional smoothness case $0<s<1$ was considered in \cite[Theorem 4.3]{ClopFaracoRuiz}. In this case, the authors show that $\bar{\partial} f\in W^{\Theta s,2}$ for every $\Theta < 1-\frac{2}{p_\kappa}$. Note that in this result there is a  loss in ``smoothness'', that is, in the $s$ parameter. We will show Theorem \ref{theoRegularityCoefficients} using the same techniques as the authors of that result with some extra care to avoid this loss. We recover their result in Corollary \ref{coroRegularityCoefficients}.

However, when $s=1$ and $\nu=0$ \cite[Proposition 4]{ClopFaracoMateuOrobitgZhong} is stronger than Theorem \ref{theoRegularityCoefficients}, since the restriction $\frac{1}{p}<\frac{1}{p_\kappa'}-\frac{1}{p_\kappa}$ is replaced by $\frac{1}{p}<\frac{1}{p_\kappa'}$ (compare Figures \ref{figDiscussionResults}(d) and \ref{figComparativeSubcritical}(c)). Thus, it is natural to ask whether the following conjecture holds.

\begin{conjecture}\label{conjRegularityCoefficients}
Let $0<s<2$, $1<p<\infty$, with $s<\frac2p$, let $\mu,\nu \in F^s_{p,q} \cap L^\infty_{c}$  satisfy \rf{eqElliptic} and let $f$ be the principal solution to the Beltrami equation \rf{eqBeltrami}. If $\frac{1}{p}<\frac{1}{p_\kappa'}$, then
 $$\bar{\partial} f\in F^s_{p,q} \mbox{ \quad\quad for every } \frac{1}{q} > \frac{1}{p}+\frac{1}{p_\kappa}.$$
\end{conjecture}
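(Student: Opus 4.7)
The plan is to adapt the proof of Theorem \ref{theoRegularityCoefficients}. Writing $g:=\bar\partial f$ and using the Cauchy-transform representation of the principal solution, $g$ satisfies
\begin{equation*}
g - \mu\,\Beurling g - \nu\,\overline{\Beurling g} \;=\; \mu + \nu.
\end{equation*}
I would apply a Bessel-type derivative $D^s$ to both sides and use a Kato--Ponce/paraproduct decomposition to commute $D^s$ past the multiplications by $\mu$ and $\nu$, obtaining
\begin{equation*}
D^s g - \mu\,\Beurling\,D^s g - \nu\,\overline{\Beurling\,D^s g}
\;=\; D^s(\mu+\nu) \;+\; [D^s,\mu]\,\Beurling g \;+\; [D^s,\nu]\,\overline{\Beurling g}.
\end{equation*}
By Kato--Ponce, the hypothesis $\mu,\nu\in F^s_{p,q}\cap L^\infty$, and Astala's integrability $\Beurling g=\partial f-1\in L^r$ for every $r<p_\kappa$, the right-hand side lies in the target Triebel--Lizorkin space with integrability exponent $q$ for every $1/q>1/p+1/p_\kappa$, i.e.\ exactly the range announced in the conjecture.

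The core obstruction is then to invert $I-\mu\Beurling-\nu\overline{\Beurling(\cdot)}$ in order to extract $D^s g$ from the previous identity. Theorem \ref{theoAIS} provides such an inverse on $L^q$ only when $1/q\in(1/p_\kappa,1/p_\kappa')$, and this is precisely what forces the extra restriction $\frac{1}{p}<\frac{1}{p_\kappa'}-\frac{1}{p_\kappa}$ in Theorem \ref{theoRegularityCoefficients}. In the conjecture's endpoint regime, as $\frac{1}{p}$ approaches $\frac{1}{p_\kappa'}$, the target exponent is forced beyond $1/p_\kappa'$, and a Neumann-series inversion is simply unavailable, since the operator norm $\|\mu\Beurling+\nu\overline{\Beurling(\cdot)}\|_{L^q\to L^q}$ exceeds $1$ in that range for generic Beltrami coefficients.

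To close the gap I would attempt a Fredholm-style argument directly on a Sobolev ambient space rather than on $L^q$. Uniqueness of the principal solution in the critical scale automatically gives triviality of the kernel of $I-\mu\Beurling-\nu\overline{\Beurling(\cdot)}$ on any reasonable space containing $g$, so only a compactness statement is missing: one seeks a decomposition of the Beltrami operator as a small-norm perturbation of the identity plus a compact remainder. A natural candidate is a paraproduct splitting of the multiplications by $\mu$ and $\nu$, in which the low-high piece is an honest contraction on the Sobolev target while the symmetric and high-low pieces gain smoothness from $\mu,\nu\in F^s_{p,q}$ and are therefore compact. This would be the fractional analogue of the compensation used in \cite{ClopFaracoMateuOrobitgZhong} for $s=1$ (which exploited $[\partial,\Beurling]=0$) and of Iwaniec's VMO argument for the critical case. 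The main obstacle, and in my view the reason the statement is only posed as a conjecture, is upgrading the Kato--Ponce bound for $[D^s,\mu]$ and $[D^s,\nu]$ from mere boundedness to genuine \emph{compactness} on $L^r$ throughout the subcritical regime $sp<2$: Kato--Ponce lands one in the right target space, but does not by itself supply the Fredholm input needed to replace the AIS inversion.
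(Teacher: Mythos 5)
The statement you are addressing is posed in the paper as Conjecture \ref{conjRegularityCoefficients}; the paper contains no proof of it, and your text does not supply one either. What you have written is a correct diagnosis of why the statement is open rather than an argument that closes it. Your first two steps (applying $D^s$ to the equation for $h=\bar\partial f -$ essentially $g=\mu\Beurling g+\nu\overline{\Beurling}g+\mu+\nu$ after mollification, and controlling the right-hand side via Kato--Ponce together with the a priori bound $\Beurling h\in L^r$ for $\frac1r>\frac1{p_\kappa}$) reproduce exactly the mechanism of the paper's proof of Theorem \ref{theoRegularityCoefficients}, and you correctly identify that the only missing piece is the invertibility of $I-\mu\Beurling-\nu\overline{\Beurling}$ on $L^q$ for $\frac1q\geq\frac1{p_\kappa'}$, which Theorem \ref{theoAIS} does not provide and which is precisely what forces the unnatural restriction $\frac1p<\frac1{p_\kappa'}-\frac1{p_\kappa}$. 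So far this is an accurate reconstruction of the state of the art, not new content.

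The Fredholm route you sketch to bridge the gap is plausible but has two unresolved points, both of which you would need to prove before this could be called a proof. First, the compactness of $[D^s,\mu_n]\Beurling$ (or of the relevant paraproduct pieces) on the subcritical target $L^q$ is, as you yourself concede, not supplied by Theorem \ref{theoHofmann}; Kato--Ponce gives boundedness with a loss of integrability, and upgrading this to compactness when $\mu\in F^s_{p,q}$ with $sp<2$ is exactly the open problem. Second, your claim that ``uniqueness of the principal solution automatically gives triviality of the kernel of $I-\mu\Beurling-\nu\overline{\Beurling}$ on any reasonable space containing $g$'' is too quick: for $q\leq p_\kappa'$ a kernel element in $L^q$ need not generate a $W^{1,2}_{loc}$ homeomorphic solution to which the uniqueness of the principal solution applies, and in Iwaniec's VMO argument the injectivity below $L^2$ is obtained by a separate duality argument, not by quoting uniqueness. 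Both points must be addressed; as written, your text should be read as a research programme consistent with the paper's Conjecture \ref{conjRegularityCoefficients}, not as a proof of it.
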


\begin{figure}[ht]
\centering
\subfigure[Behavior for $s<1$ and $p=2$ described in {\cite[Theorem 4.3]{ClopFaracoRuiz}}.]
{
\begin{tikzpicture}[line cap=round,line join=round,>=triangle 45,x=3.0cm,y=1.12cm]
\draw[->,color=grisclar] (-0.12,0.) -- (1.1,0.);
\foreach \x in {0,0.25,0.5,0.75,1.}
\draw[shift={(\x,0)},color=grisclar] (0pt,2pt) -- (0pt,-2pt);
 { \draw[->,color=grisclar] (0.,-0.88) -- (0.,3.1);};
 {\foreach \y in {,1.,2.,3.}\draw[shift={(0,\y)},color=grisclar] (1.1,0.) -- (-2pt,0.);};
{\clip(-0.22,-0.88) rectangle (1.1,3.1);};
{\draw (1.0,-0.1) node[anchor=north west] {$\frac{1}{p}$};};
{\draw [color=grisclar, dash pattern=on 2pt off 2pt,domain=-0.12:1.1] plot(\x,{(-0.--2.*\x)/1.});};
{\draw [color=grisclar] (1.,-0.88) -- (1.,3.1);};
{\draw [color=grisclar, dash pattern=on 1pt off 3pt] (0.5,0) -- (0.5,1.24);};
{\draw [color=negre, dash pattern=on 1pt off 3pt] (0.3,0) -- (0.3,1);};
{\draw (-0.18,3.1) node[anchor=north west] {$s$};};
{\node [circle, draw=negre, thick,inner sep=0pt, minimum size=1pt] (1) at (0.7,0) {$\,$};};
{\draw (0.43,-0.07) node[anchor=north east] {$\frac{1}{p_\kappa}$};};
{\node [circle, draw=negre, thick,inner sep=0pt, minimum size=1pt] (2) at (0.3,0) {$\,$};};
{\draw (0.83,-0.07) node[anchor=north east] {$\frac{1}{p_\kappa'}$};};
{\draw [color=grisfosc] (0.7,3) node[anchor=north west] {$W^{s,p}_{loc}$};};
{\draw (1.12,-0.1) node[anchor=north west] {$\frac{1}{p}$};};
{\draw[color=red] (0.5,1.6) node[anchor=south west] {$f$};};
{\fill[color=red,fill=vermell,fill opacity=0.5] (0.,0.) -- (1.,0.) -- (1.,1.24) -- (0.5,1.24) -- (0.3,1) -- (0,0.4)-- cycle;};
{\draw[color=red] (0,0.4) -- (0.,0.) -- (1.,0.) -- (1,1.24) ;};
{\draw[color=red, dash pattern=on 2pt off 2pt] (0,0.4) -- (0.3,1) -- (0.5,1.24)  -- (1,1.24);};
{\draw [color=blau] (0.5,0.6) node[anchor=south west] {$\mu$};};
{\draw[color= blau] (0.,0.) -- (1.,0.) -- (1.,0.6) -- (0.5,0.6)  -- cycle;};
{\draw [color= blauverd] (0.5,0.14) node[anchor=south west] {$\bar\partial f$};};
{\draw[color=blauverd, dash pattern=on 1pt off 3pt] (0.3,0) -- (0.5,0.24) -- (1,0.24) ;};
\begin{scriptsize}
{\draw [color=blau, thick] (0.5,0.6)-- ++(-1.5pt,-1.5pt) -- ++(3.0pt,3.0pt) ++(-3.0pt,0) -- ++(3.0pt,-3.0pt);};
{\node [circle, draw=blauverd,inner sep=0pt, minimum size=2pt] (1) at (0.5,0.24) {$\,$};};
{\node [circle, draw=red,inner sep=0pt, minimum size=2pt] (1) at (0.5,1.24) {$\,$};};
{\node [circle, draw=red,inner sep=0pt, minimum size=2pt] (1) at (0.3,1) {$\,$};};
\end{scriptsize}
\end{tikzpicture}
}\,\,
\subfigure[Behavior described in Theorem \ref{theoRegularityCoefficients}.]
{
\begin{tikzpicture}[line cap=round,line join=round,>=triangle 45,x=3.0cm,y=1.12cm]
\draw[->,color=grisclar] (-0.12,0.) -- (1.1,0.);
\foreach \x in {0,0.25,0.5,0.75,1.}
\draw[shift={(\x,0)},color=grisclar] (0pt,2pt) -- (0pt,-2pt);
 { \draw[->,color=grisclar] (0.,-0.88) -- (0.,3.1);};
 {\foreach \y in {,1.,2.,3.}\draw[shift={(0,\y)},color=grisclar] (1.1,0.) -- (-2pt,0.);};
{\clip(-0.22,-0.88) rectangle (1.1,3.1);};
{\draw (1.0,-0.1) node[anchor=north west] {$\frac{1}{p}$};};
{\draw [color=grisclar, dash pattern=on 2pt off 2pt,domain=-0.12:1.1] plot(\x,{(-0.--2.*\x)/1.});};
{\draw [color=grisclar] (1.,-0.88) -- (1.,3.1);};
{\draw [color=negre, dash pattern=on 1pt off 3pt] (0.3,0) -- (0.3,1);};
{\draw [color=negre, dash pattern=on 1pt off 3pt] (0.7,0) -- (0.7,1);};
{\draw (-0.18,3.1) node[anchor=north west] {$s$};};
{\node [circle, draw=negre, thick,inner sep=0pt, minimum size=1pt] (1) at (0.7,0) {$\,$};};
{\draw (0.43,-0.07) node[anchor=north east] {$\frac{1}{p_\kappa}$};};
{\node [circle, draw=negre, thick,inner sep=0pt, minimum size=1pt] (2) at (0.3,0) {$\,$};};
{\draw (0.83,-0.07) node[anchor=north east] {$\frac{1}{p_\kappa'}$};};
{\draw [color=grisfosc] (0.7,3) node[anchor=north west] {$W^{s,p}_{loc}$};};
{\draw (1.12,-0.1) node[anchor=north west] {$\frac{1}{p}$};};
{\draw[color=red] (0.65,1.42) node[anchor=south west] {$f$};};
{\fill[color=red,fill=vermell,fill opacity=0.5] (0.,0.) -- (1.,0.) -- (1.,1.42) -- (0.65,1.42) -- (0.3,1) -- (0,0.4)-- cycle;};
{\draw[color=red] (0,0.4) -- (0.,0.) -- (1.,0.) -- (1,1.42) -- (0.65,1.42);};
{\draw[color=red, dash pattern=on 2pt off 2pt] (0,0.4) -- (0.3,1) -- (0.65,1.42);};
{\draw [color=blau] (0.35,0.42) node[anchor=south west] {$\mu$};};
{\draw[color= blau] (0.,0.) -- (1.,0.) -- (1.,0.42) -- (0.35,0.42)  -- cycle;};
{\draw [color= blauverd] (0.65,0.42) node[anchor=south west] {$\bar\partial f$};};
{\draw[color=blauverd, dash pattern=on 2pt off 2pt] (0.3,0) -- (0.65,0.42);};
{\draw[color=blauverd, thick] (0.65,0.4) -- (1, 0.4) ;};
{\draw [color=negre, dash pattern=on 1pt off 3pt] (0.65,0.4) -- (0.65,1.4);};
\begin{scriptsize}
{\draw [color=blau, thick] (0.35,0.42)-- ++(-1.5pt,-1.5pt) -- ++(3.0pt,3.0pt) ++(-3.0pt,0) -- ++(3.0pt,-3.0pt);};
{\node [circle, fill=white, inner sep=0pt, minimum size=2pt] (1) at (0.65,0.42) {$\,$};};
{\node [circle, draw=blauverd,fill=vermell,fill opacity=0.5, inner sep=0pt, minimum size=2pt] (1) at (0.65,0.42) {$\,$};};
{\node [circle, draw=red,inner sep=0pt, minimum size=2pt] (1) at (0.65,1.42) {$\,$};};
{\node [circle, draw=red,inner sep=0pt, minimum size=2pt] (1) at (0.3,1) {$\,$};};
\end{scriptsize}
\end{tikzpicture}
}\,\,
\subfigure[Behavior described in Corollary \ref{coroRegularityCoefficients}.]
{
\begin{tikzpicture}[line cap=round,line join=round,>=triangle 45,x=3.0cm,y=1.12cm]
\draw[->,color=grisclar] (-0.12,0.) -- (1.1,0.);
\foreach \x in {0,0.25,0.5,0.75,1.}
\draw[shift={(\x,0)},color=grisclar] (0pt,2pt) -- (0pt,-2pt);
 { \draw[->,color=grisclar] (0.,-0.88) -- (0.,3.1);};
 {\foreach \y in {,1.,2.,3.}\draw[shift={(0,\y)},color=grisclar] (1.1,0.) -- (-2pt,0.);};
{\clip(-0.22,-0.88) rectangle (1.1,3.1);};
{\draw (1.0,-0.1) node[anchor=north west] {$\frac{1}{p}$};};
{\draw [color=grisclar, dash pattern=on 2pt off 2pt,domain=-0.12:1.1] plot(\x,{(-0.--2.*\x)/1.});};
{\draw [color=grisclar] (1.,-0.88) -- (1.,3.1);};
{\draw [color=negre, dash pattern=on 1pt off 3pt] (0.3,0) -- (0.3,1);};
{\draw [color=negre, dash pattern=on 1pt off 3pt] (0.7,0) -- (0.7,1.54);};
{\draw (-0.18,3.1) node[anchor=north west] {$s$};};
{\node [circle, draw=negre, thick,inner sep=0pt, minimum size=1pt] (1) at (0.7,0) {$\,$};};
{\draw (0.43,-0.07) node[anchor=north east] {$\frac{1}{p_\kappa}$};};
{\node [circle, draw=negre, thick,inner sep=0pt, minimum size=1pt] (2) at (0.3,0) {$\,$};};
{\draw (0.83,-0.07) node[anchor=north east] {$\frac{1}{p_\kappa'}$};};
{\draw [color=grisfosc] (0.7,3) node[anchor=north west] {$W^{s,p}_{loc}$};};
{\draw (1.12,-0.1) node[anchor=north west] {$\frac{1}{p}$};};
{\draw[color=red] (0.7,1.54) node[anchor=south west] {$f$};};
{\fill[color=red,fill=vermell,fill opacity=0.5] (0.,0.) -- (1.,0.) -- (1.,1.54) -- (0.7,1.54) -- (0.3,1) -- (0,0.4)-- cycle;};
{\draw[color=red] (0,0.4) -- (0.,0.) -- (1.,0.) -- (1,1.54);};
{\draw[color=red, dash pattern=on 2pt off 2pt] (0,0.4) -- (0.3,1) -- (0.7,1.54) -- (1,1.54);};
{\draw [color=blau] (0.6,0.65) node[anchor=south east] {$\mu$};};
{\draw[color= blau] (0.,0.) -- (1.,0.) -- (1.,0.81) -- (0.6,0.81)  -- cycle;};
{\draw [color= blauverd] (0.7,0.35) node[anchor=south west] {$\bar\partial f$};};
{\draw[color=blauverd, dash pattern=on 1pt off 3pt] (0.3,0) -- (0.7,0.54) -- (1,0.54);};
\begin{scriptsize}
{\draw [color=blau, thick] (0.6,0.81)-- ++(-1.5pt,-1.5pt) -- ++(3.0pt,3.0pt) ++(-3.0pt,0) -- ++(3.0pt,-3.0pt);};
{\node [circle, fill=white, inner sep=0pt, minimum size=2pt] (1) at (0.7,0.54) {$\,$};};
{\node [circle, draw=blauverd,fill=vermell,fill opacity=0.5, inner sep=0pt, minimum size=2pt] (1) at (0.7,0.54) {$\,$};};
{\node [circle, draw=red,inner sep=0pt, minimum size=2pt] (1) at (0.7,1.54) {$\,$};};
{\node [circle, draw=red,inner sep=0pt, minimum size=2pt] (1) at (0.3,1) {$\,$};};
\end{scriptsize}
\end{tikzpicture}
}
\caption{Regularity of the principal quasiconformal solution to \rf{eqBeltrami} when the coefficients satisfy a-priori conditions.}\label{figComparativeSubcritical}
\end{figure}
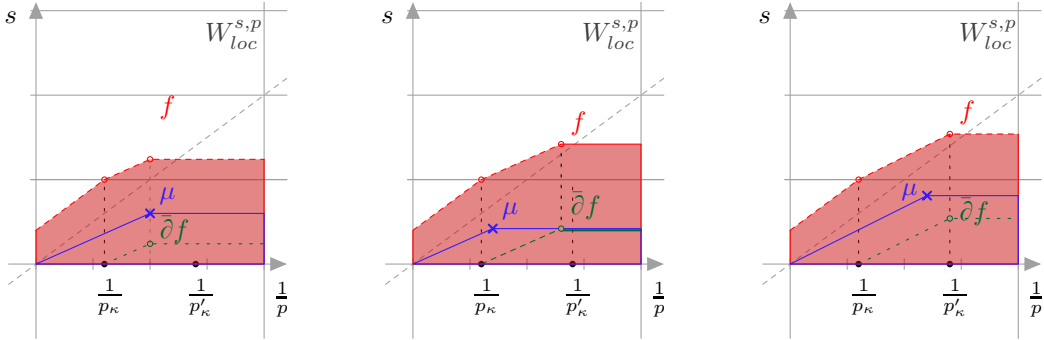

\section{Proof of Theorem \ref{theoRegularityCoefficients}}\label{secProof}
\begin{definition}
Given   $s\in\R$ and a tempered distribution $f\in \mathcal{S}' (\C)$, we define its fractional derivative 
$$D^s f := \left(|\cdot|^{s} \widehat{f}(\cdot)\right)\widecheck{\,}$$
in the sense of tempered distributions modulo polynomials (see \cite[Sections 5.1.2, 5.2.3]{TriebelTheory}).
\end{definition}

The Beurling transform commutes with fractional differentiation of order smaller or equal than $s$ on $W^{s,p}$. Indeed, since the Beurling transform is bounded in $W^{s,p}$ (see \cite[Section 2.6.6]{TriebelTheory}, for instance), we have that both $D^s \circ \Beurling$ and $\Beurling \circ D^s$ map $W^{s,p}$ into $L^p$. In the $\mathcal{S}$ class, which is dense, we have that $ D^s \Beurling f= \left(|\xi|^{s} \frac{\bar{\xi}}{\xi}\widehat{f}(\cdot)\right)\widecheck{\,} = \Beurling D^s f$, so the equality extends to $W^{s,p}$. 

Next we define the commutator of the multiplication by a test function with the fractional differentiation.
\begin{definition} 
For every pair $\mu , f \in C^\infty_c$, we define
\begin{equation*}
[\mu, D^s] f := \mu D^s f -  D^s (\mu f ).
\end{equation*}
\end{definition}
Next we recover Kato-Ponce Leibniz' rule as presented by Hofmann.
\begin{theorem}[see {\cite[Corollary 1.2]{Hofmann}}]\label{theoHofmann}
Let $0<s<1$, $1<r<\infty$ and  $\mu\in C^\infty_c$. The commutator $[\mu, D^s]$ extends to a linear operator with kernel
\begin{equation*}
K^s_\mu(x,y)=c\frac{\mu(x)-\mu(y)}{|x-y|^s},
\end{equation*}
mapping $L^p$ to $L^q$ for every $1\leq q < p\leq \infty$, whenever $\frac{1}{r}:=\frac{1}{q}-\frac{1}{p} \in (0,1)$. Moreover, 
$$\norm{[\mu, D^s] f}_{L^q}\lesssim_{p,q} \norm{D^s \mu}_{L^r}\norm{f}_{L^p}.$$
\end{theorem}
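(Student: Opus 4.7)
The plan is to identify the kernel of $[\mu,D^s]$ explicitly, then exploit a Hedberg-type pointwise control by $M(D^s\mu)$, and finally close via Hardy--Littlewood--Sobolev together with H\"older's inequality.

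First, for $0<s<1$ one has the pointwise singular integral representation $D^s g(x) = c_{d,s}\,\mathrm{p.v.}\int (g(x)-g(y))/|x-y|^{d+s}\,dy$ on Schwartz functions. Substituting this into $[\mu,D^s]f = \mu\,D^sf - D^s(\mu f)$, the diagonal contributions $\mu(x)f(x)$ cancel and one reads off
\[
[\mu,D^s]f(x) = c_{d,s}\int \frac{\mu(y)-\mu(x)}{|x-y|^{d+s}}\,f(y)\,dy,
\]
which is the kernel claimed in the statement (with the natural reading that the denominator carries the exponent $d+s$; only with that exponent does the expression make sense as a Calder\'on--Zygmund-type kernel).

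Next, writing $\mu = I^s(D^s\mu)$ and splitting the defining Riesz integral according to whether the integration variable lies closer to $x$ or to $y$ than the distance $|x-y|$, one obtains the Hedberg-type pointwise inequality
\[
|\mu(x)-\mu(y)| \;\lesssim\; |x-y|^s\,\bigl(M(D^s\mu)(x) + M(D^s\mu)(y)\bigr).
\]
Naively substituting into the kernel cancels the factor $|x-y|^s$ and leaves the borderline weight $|x-y|^{-d}$ on the diagonal, and dealing with this borderline non-integrability is the main obstacle. The remedy I would follow, in the spirit of Hofmann's argument, is a Littlewood--Paley / paraproduct decomposition of the bilinear operator $(\mu,f)\mapsto[\mu,D^s]f$, separating the frequencies of $\mu$ and $f$ and estimating via square functions: the high-low and low-high paraproducts are absolutely convergent by direct Coifman--Meyer-style bounds, while the resonant (high-high) paraproduct is the one that absorbs the logarithmic divergence via a vector-valued maximal inequality.

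Once the bilinear pieces are under control, the $L^p\to L^q$ estimate closes by H\"older's inequality matched to the exponents $1/q=1/p+1/r$, the Hardy--Littlewood maximal theorem (applicable in $L^r$ since $r>1$) to bound $M(D^s\mu)$ by $\|D^s\mu\|_{L^r}$, and Hardy--Littlewood--Sobolev to handle any residual Riesz-potential piece. A standard density argument then extends the bound from $C^\infty_c$ test functions to all $f\in L^p$. The technical heart is the bilinear refinement in the previous step: the naive Hedberg bound fails by a borderline logarithm on the diagonal, and recovering the clean estimate $\|[\mu,D^s]f\|_{L^q}\lesssim \|D^s\mu\|_{L^r}\|f\|_{L^p}$ requires precisely this paraproduct/atomic refinement rather than a purely pointwise argument.
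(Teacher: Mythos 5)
The paper does not actually prove this statement: it is imported verbatim from Hofmann \cite[Corollary 1.2]{Hofmann}, so there is no internal argument to compare yours against. Judged on its own terms, your outline gets the preliminaries right. The kernel computation is correct, and you are right that the denominator must carry the exponent $d+s$ (with $d=2$ here) for the formula to be meaningful; the exponent $s$ in the statement is best read as a misprint or a shorthand. The Hedberg-type bound $|\mu(x)-\mu(y)|\lesssim|x-y|^{s}\bigl(M(D^{s}\mu)(x)+M(D^{s}\mu)(y)\bigr)$ is indeed a true and standard consequence of writing $\mu=I_{s}(D^{s}\mu)$ and splitting the Riesz integral near and far from $x$ and $y$. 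You also correctly diagnose that this pointwise bound only reduces the kernel to the borderline weight $|x-y|^{-d}$, which is not locally integrable, so no amount of H\"older, maximal-function, or Hardy--Littlewood--Sobolev bookkeeping can close the estimate from there. (Note in passing that HLS is not really in play: the target scaling $\frac1q=\frac1p+\frac1r$ is exactly H\"older's, with no smoothing to spare.)

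The gap is that the step you yourself call ``the technical heart'' --- the Littlewood--Paley/paraproduct decomposition and, in particular, the treatment of the resonant piece that is supposed to absorb the logarithmic divergence --- is only named, not performed. As written, the argument establishes nothing beyond what the failed pointwise bound gives: everything of substance is delegated to ``Hofmann's argument'' or ``Coifman--Meyer-style bounds'' without specifying the decomposition, the symbol cancellation in $|\xi+\eta|^{s}-|\eta|^{s}$ that renders the piece with $f$ at high frequency tractable, or the square-function and vector-valued maximal estimates needed for the diagonal piece; the endpoints $q=1$ and $p=\infty$ allowed in the statement would also need separate attention in any such scheme. Since the result is in any case quoted from the literature, the two defensible options are to cite Hofmann outright (as the paper does) or to carry out the paraproduct estimates in full; the proposal as it stands does neither.
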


We will also use  the Young inequality. It states that for measurable functions $f$ and $g$, we have that
\begin{equation}\label{eqYoung}
\norm{f * g}_{L^q}\leq\norm{f}_{L^r}\norm{g}_{L^p}
\end{equation}
for $1\leq p,q,r\leq \infty$ with $\frac{1}{q}=\frac{1}{p}+\frac{1}{r}-1$ (see \cite[Appendix A2]{SteinPetit}).

\begin{proof}[Proof of Theorem \ref{theoRegularityCoefficients}]
We will study in one stroke the critical and the subcritical case. To do so, we will use the  convention $p_\kappa = \infty$ in the critical situation, that is, whenever $sp=2$, while we use the standard notation $p_\kappa=1+\frac1\kappa$ in the subcritical situation ($sp<2$). Thus, $I-\mu\Beurling$ is invertible in $L^q$ for $p_\kappa'<q<p_\kappa$ (see Theorem \ref{theoAIS} for the subcritical setting and \cite[Section 1]{Iwaniec} for the critical one).

Without loss of generality, we may assume that $\supp (\mu) \cup \supp (\nu)\subset \DDD$. Let $\psi\in C^\infty_c(\DDD)$ be a positive radial function such that $\int \psi = 1$ and consider the approximation of the unity $\psi_n(z) := n^2 \psi (n z)$ for every $n\in\N$. Consider the families of functions $\mu_n:=\psi_n * \mu$ and $\nu_n:=\psi_n * \nu$, which are supported in the disk $2\DDD$. If $\nu_n(z) \mu_n(z)\neq 0$, then
$$|\mu_n(z)|+|\nu_n(z)| = |\mu_n(z)|\left|\frac{|\nu_n(z)|}{ |\mu_n(z)|} + 1\right| = \left|\frac{\mu_n(z)}{ |\mu_n(z)|}\frac{|\nu_n(z)|}{\nu_n(z)}\nu_n(z) + \mu_n(z)\right|= \left| (c_z \nu + \mu) * \psi_n(z)\right|,$$
where $c_z$ stands for the unitary complex number $\frac{\mu_n(z)}{ |\mu_n(z)|}\frac{|\nu_n(z)|}{\nu_n(z)}$. This expression, together with the hypothesis \rf{eqElliptic} and the Young inequality \rf{eqYoung}, imply that
$$|\mu_n(z)|+|\nu_n(z)| \leq \kappa.$$

On the other hand, being the convolution with $\psi_n$ an approximation of the identity, $D^s \mu_n=D^s\mu * \psi_n$ converges to $D^s \mu$ in $L^p$ if $D^s \mu\in L^p$ for $p<\infty$, and the same can be said about $\{\nu_n\}$. Thus, 
\begin{equation}\label{eqMunUniformControl}
\norm{\mu_n-\mu}_{W^{s,p}}\xrightarrow {n\to \infty} 0 \mbox{\quad\quad and \quad\quad} \norm{\nu_n-\nu}_{W^{ s,p}}\xrightarrow {n\to \infty} 0.
\end{equation}
Let us define 
\begin{equation}\label{eqDefiningH}
h:= (I-\mu \Beurling - \nu \bar{\Beurling})^{-1}(\mu+\nu)
\end{equation}
and
\begin{equation}\label{eqDefiningHn}
h_n:= (I-\mu_n\Beurling - \nu_n\bar{\Beurling})^{-1}(\mu_n+\nu_n).
\end{equation}
 Then, $f_n(z):=z+\Cauchy h_n(z)$ is the principal solution to \rf{eqBeltrami} with coefficients $\mu_n$ and $\nu_n$ (\cite[Lemma 4.2]{ClopFaracoRuiz}), and we have that 
\begin{equation}\label{eqhnUniformControl}
\norm{h_n}_{L^\ell}\leq C_{\kappa,\ell}
\end{equation}
for every $\frac{1}{\ell}>\frac1{p_\kappa}$.

The case $s=1$ is fully covered by \cite[Proposition 4]{ClopFaracoMateuOrobitgZhong} and \cite[Corolario 3.8]{BaisonThesis}.  Let us consider first the case $s<1$. 
In terms of $h_n$, equation \rf{eqBeltrami} read as
$$h_n = \mu_n \Beurling h_n +\mu_n + \nu_n \overline{\Beurling} h_n + \nu_n.$$
Note that $h_n$ is $C^\infty_c$, and $\mu_n, \nu_n \in C^\infty_c$ as well, while $\Beurling h_n\in C^\infty \cap W^{m,p}$ for every $m\in \N$ and $1<p<\infty$. Thus, we can take fractional derivatives of order $s$ to get
\begin{align*}
D^s h_n 
\nonumber	& = D^s(\mu_n \Beurling h_n  + \nu_n \overline{\Beurling} h_n+\mu_n + \nu_n) \\
	& = \mu_n D^s \Beurling h_n  -[\mu_n, D^s](\Beurling h_n) + \nu_n D^s \overline{\Beurling} h_n -[\nu_n, D^s](\overline{\Beurling} h_n)  + D^s \mu_n + D^s \nu_n,
\end{align*}
that is, since the Beurling transform commutes with the fractional derivatives for $C^\infty_c$ functions, 
\begin{equation}\label{eqIdentityBeforeArmaggedon}
D^s h_n  - \mu_n  \Beurling (D^s h_n)  - \nu_n  \overline{\Beurling} (D^sh_n)
	 = -[\mu_n, D^s](\Beurling h_n) -[\nu_n, D^s](\overline{\Beurling} h_n) + D^s \mu_n + D^s \nu_n.
\end{equation}
Next, by \rf{eqhnUniformControl} we have that
\begin{equation}\label{eqBhnUniformControl}
\norm{\Beurling h_n}_{L^\ell}\leq C_{\kappa, q} \mbox{\quad\quad for every } \frac{1}{\ell}>\frac{1}{p_\kappa}
\end{equation}
and, by \rf{eqMunUniformControl}, for $n$ big enough
\begin{equation}\label{eqDsMunUniformControl}
\norm{D^s \mu_n}_{L^r}+\norm{D^s \nu_n}_{L^r} \leq C_{\kappa, p, r, \norm{\mu}_{W^{s,p}}, \norm{\nu}_{W^{s,p}}} \mbox{\quad\quad for every } \frac{1}{r}\geq \frac{1}{p}.
\end{equation}
Given $\frac{1}{q}>\frac{1}{p}+\frac{1}{p_\kappa}$, we can write $\frac{1}{q}=\frac{1}{\ell}+\frac{1}{r}$ satisfying restrictions \rf{eqBhnUniformControl} and \rf{eqDsMunUniformControl}, so Theorem \ref{theoHofmann} with \rf{eqhnUniformControl} and \rf{eqBhnUniformControl} yields
\begin{align}\label{eqTsUniformControl}
\norm{[\mu_n, D^s] \Beurling h_n+[\nu_n, D^s] \overline{\Beurling} h_n}_{L^q}
\nonumber	&  \lesssim_{\kappa, p, q} \left(\norm{D^s \mu_n}_{L^r}\norm{\Beurling h_n}_{L^\ell} +\norm{D^s \nu_n}_{L^r}\norm{\overline{\Beurling} h_n}_{L^\ell} \right)\\
	& \leq C_{\kappa, p, q, \norm{\mu}_{W^{s,p}}, \norm{\nu}_{W^{s,p}}}.
\end{align}
Thus, we have a uniform control on the $L^q$ norm of the right hand side of \rf{eqIdentityBeforeArmaggedon}. In case 
\begin{equation*}
\frac{1}{p}+\frac{1}{p_\kappa}<\frac{1}{p_\kappa'}
\end{equation*}
we can find  $\frac{1}{q}$ in between where we can invert the operator $I-\mu_n\Beurling -\nu_n \bar\Beurling$ and, using \rf{eqIdentityBeforeArmaggedon}, we can  write
\begin{align*}
\norm{D^s h_n}_{L^q}
	& = \norm{(I-\mu_n\Beurling - \nu_n\bar{\Beurling})^{-1} ([\mu_n, D^s](\Beurling h_n) +[\nu_n, D^s](\overline{\Beurling} h_n) - D^s \mu_n - D^s \nu_n)}_{L^q}\\
	& \lesssim \norm{ [\mu_n, D^s](\Beurling h_n) +[\nu_n, D^s](\overline{\Beurling} h_n) - D^s \mu_n - D^s \nu_n}_{L^q}
\end{align*}
which is uniformly bounded by \rf{eqInverseBeltramiOperatorUniformControl}, \rf{eqTsUniformControl} and \rf{eqDsMunUniformControl}. Combining these estimates with \rf{eqhnUniformControl}, we get 
\begin{equation}\label{eqUniformControlInH}
\norm{h_n}_{W^{s,q}}\lesssim C_{\kappa, p, q, \norm{\mu}_{W^{s,p}}, \norm{\nu}_{W^{s,p}}}.
\end{equation}
The Banach-Alaoglu Theorem shows that there exists a subsequence $h_{n_k}$ converging to $\widetilde h \in W^{s,p}$ as tempered distributions, with $\|\widetilde{h}\|_{W^{s,q}}\lesssim C_{\kappa, p, q, \norm{\mu}_{W^{s,p}}, \norm{\nu}_{W^{s,p}}}$.  

It only remains to transfer this information to $h$. First, note that 
\begin{equation}\label{eqL2Convergence}
h_n\xrightarrow {n\to\infty} h \mbox{\quad\quad in } L^2.
\end{equation}
Indeed, from \rf{eqDefiningH} and \rf{eqDefiningHn}, we have that
\begin{align*}
|h-h_n|	
	& =|\mu \Beurling h -\mu_n\Beurling h_n + \nu \bar{\Beurling} h-\nu_n \bar{\Beurling} h_n + \mu-\mu_n+\nu-\nu_n|\\
	& \leq |\mu| |\Beurling (h-h_n)| +|\mu-\mu_n||\Beurling h_n |+ |\nu| |{\Beurling} (h-h_n)| +|\nu-\nu_n| |\Beurling h_n| + |\mu-\mu_n|+|\nu-\nu_n|.
\end{align*}
Taking $L^2$ norms, 
\begin{align*}
\norm{h-h_n}_{L^2}
	& \leq \kappa \norm{\Beurling (h-h_n)}_{L^2} +\norm{(\mu-\mu_n) \Beurling h_n}_{L^2} + \norm{(\nu-\nu_n)\Beurling h_n}_{L^2} + \norm{\mu-\mu_n}_{L^2}+\norm{\nu-\nu_n}_{L^2}.
\end{align*}
Since $\kappa<1$ and the Beurling transform is an isometry in $L^2$,  the left-hand side can  absorb the first term in the right-hand side. To deal with the second and the third terms, choose $2<\widetilde{q}<p_\kappa$, and let $\frac1r+\frac1{\widetilde{q}}=\frac12$. Then, by \rf{eqDefiningHn} and Theorem \ref{theoAIS}, the norm of $h_n$ in $L^{\widetilde{q}}$ is uniformly bounded, and using the $L^p$ version of \rf{eqMunUniformControl} (i.e., Young's inequality) as well we get 
$$\norm{(\mu-\mu_n) \Beurling h_n}_{L^2} \leq \norm{\mu-\mu_n}_{L^r} \norm{\Beurling h_n}_{L^{\widetilde{q}}}\leq C_{\kappa,\widetilde{q},n} \xrightarrow[n\to\infty]{}  0 . $$
Using again Young's inequality for the remaining terms, we see that
\begin{align*}
\norm{h-h_n}_{L^2}
	& \leq \frac{C_{\kappa,n}}{1-\kappa}  \xrightarrow[n\to\infty]{}  0 ,
\end{align*}
showing \rf{eqL2Convergence}. Since this implies convergence as tempered distributions and $\mathcal{S}'$ is a Hausdorff space, we get that $h=\widetilde{h}$. This finishes the case $s<1$. 

It remains to show that \rf{eqBigTarget} holds when $\frac{1}{p}<\frac{1}{p_\kappa'}-\frac{1}{p_\kappa}$ and $s>1$. If this is the case, then for every $1<q<\infty$ we have that $\norm{h}_{ W^{s,q}}\approx \norm{h}_{L^q} + \norm{\partial h}_{W^{s-1,q}}$ by the lifting property (see \cite[Section 2.3.8]{TriebelTheory}) and the boundedness of the Beurling transform in Sobolev spaces.
We will assume that $\nu=\nu_n=0$ to keep a compact notation, leaving the necessary modifications to the reader. Fix $n\geq 1$. By assumption, $\mu \in W^{s,p}\cap L^\infty$. By \rf{eqInterpolateLInfty} we also have that 
\begin{equation}\label{eqUniformControlDerivativesMuN}
\mu, \mu_n \in W^{1,sp}\cap W^{s-1, \frac{sp}{s-1}} \mbox{ uniformly in }n.
\end{equation}
As a consequence, 
\begin{equation}\label{eqUniformControlDerivativesHN}
\partial h,\partial h_n\in L^r  \mbox{ uniformly  in } n   \mbox{ for every given } \frac1r > \frac{1}{sp} + \frac{1}{p_\kappa}
\end{equation}
 by \cite[Proposition 4]{ClopFaracoMateuOrobitgZhong} and \cite[Corolario 3.8]{BaisonThesis}, and, as we have proven above,
\begin{equation}\label{eqUniformControlFractionalDerivativesHN}
D^{s-1} h,D^{s-1} h_n\in L^r  \mbox{ uniformly  in } n   \mbox{ for every given } \frac1r > \frac{s-1}{sp} + \frac{1}{p_\kappa}.
\end{equation}

Thus, we only need to deal with the homogeneous norm, that is, $\norm{D^{s-1}\partial h}_{L^q}$.
From \rf{eqBeltrami} we deduce that
\begin{align*}
\partial h_n 
	& = \mu_n \partial \Beurling  h_n + \partial\mu_n \Beurling h_n +  \partial \mu_n 
\end{align*}
and differentiating we get
\begin{align*}
D^{s-1} \partial h_n 
	& = \mu_n   D^{s-1} \Beurling  \partial h_n - [\mu_n, D^{s-1}](\Beurling  \partial h_n) +  \partial\mu_n D^{s-1} \Beurling h_n - [\partial \mu_n, D^{s-1}](\Beurling h_n)  +   D^{s-1} \partial \mu_n
\end{align*}
leading to
\begin{align}\label{eqOperatorReadyToInvert}
 \norm{ (I- \mu_n  \Beurling )(D^{s-1} \partial h_n)  }_{L^q}
\nonumber	  & \leq \norm{[\mu_n, D^{s-1}](\Beurling  \partial h_n) }_{L^q}+  \norm{\partial\mu_n D^{s-1} \Beurling h_n}_{L^q} +\norm{[\partial \mu_n, D^{s-1}](\Beurling h_n) }_{L^q}\\
	&	\quad +\norm{  D^{s-1} \partial \mu_n }_{L^q}  = \circled{1}+\circled{2}+\circled{3}+\circled{4}.
\end{align}
Note that in the previous case, \rf{eqIdentityBeforeArmaggedon} had a simpler form, but the essential ideas to control the norm of the right-hand side are the same. We will find $\frac1{p_\kappa}<\frac1q<\frac{1}{p_\kappa'}$ such that $\circled{j} \leq C_{\kappa,q}$ for $j\in\{1,2,3,4\}$.

First of all, by Theorem \ref{theoHofmann} we have that
$$\circled{1}=\norm{[\mu_n, D^{s-1}](\Beurling  \partial h_n) }_{L^q}\leq \norm{ D^{s-1}\mu_n}_{L^{\frac{sp}{s-1}}}\norm{\Beurling  \partial h_n}_{L^{r_1}}$$
for $\frac{1}{q}=\frac{s-1}{sp}+\frac{1}{r_1}$. The first term is uniformly bounded by \rf{eqUniformControlDerivativesMuN}, and the last one is controlled by \rf{eqUniformControlDerivativesHN} as long as  $\frac{1}{r_1}>\frac{1}{sp} + \frac{1}{p_\kappa}$.  It is possible to find such a value for $r_2$ as long as 
$$\frac{1}{q}>\frac{s-1}{sp} +\frac{1}{sp} + \frac{1}{p_\kappa} = \frac{1}{p}+\frac{1}{p_\kappa}.$$

On the other hand, using the commutation of fractional derivatives and the Beurling transform and H\"older's inequality, we have that 
$$\circled{2}=\norm{\partial\mu_n  \Beurling D^{s-1} h_n}_{L^q} \leq \norm{\partial\mu_n}_{L^{sp}}\norm{ \Beurling D^{s-1} h_n}_{L^{r_2}}$$
for $\frac{1}{q}=\frac{1}{sp}+\frac{1}{r_2}$. The first term is uniformly bounded by \rf{eqUniformControlDerivativesMuN}, and the last one is controlled by \rf{eqUniformControlFractionalDerivativesHN} as long as  $\frac{1}{r_2}>\frac{s-1}{sp} + \frac{1}{p_\kappa}$.  It is possible to find such a value for $r_2$ as long as 
$$\frac{1}{q}>\frac{1}{sp} +\frac{s-1}{sp} + \frac{1}{p_\kappa} = \frac{1}{p}+\frac{1}{p_\kappa}.$$

The latter two terms are bounded as before. By Theorem \ref{theoHofmann} we have that
$$\circled{3}=\norm{[\partial \mu_n, D^{s-1}](\Beurling h_n)}_{L^q}\leq \norm{D^{s-1}\partial\mu_n}_{L^p}\norm{\Beurling h_n}_{L^{r_3}} $$
for $\frac{1}{q}=\frac{1}{p}+\frac{1}{r_3}$, the last term being uniformly controlled for $\frac{1}{r_3}>\frac{1}{p_\kappa}$ by Theorem \ref{theoAIS}. It is possible to find such a value for $r_3$ as long as 
$$\frac{1}{q}>\frac{1}{p}+\frac{1}{p_\kappa}.$$
Finally,
$$\circled{4}=\norm{D^{s-1} \partial \mu_n}_{L^q} \leq C_{\kappa,q} \mbox{ for every }\frac{1}{q} \geq \frac{1}{p}.$$
This facts, together with \rf{eqOperatorReadyToInvert} and Theorem \ref{theoAIS}, show that \rf{eqUniformControlInH} holds for $s>1$ with exactly the same restrictions as when $s<1$, that is, when $\frac{1}{p}+\frac{1}{p_\kappa}<\frac{1}{q}<\frac{1}{p_\kappa'}$. The theorem follows by the Banach-Alaoglu Theorem again.
\end{proof}

\renewcommand{\abstractname}{Acknowledgements}
\begin{abstract}
The author wants to express his gratitude towards Xavier Tolsa, Eero Saksman and Daniel Faraco for their encouragement in solving this problem, Albert Clop and Antonio Luis Bais\'on for their advice on the matter.  

The author was funded by the European Research Council under the grant agreement 307179-GFTIPFD, and partially funded by AGAUR - Generalitat de Catalunya (2014 SGR 75). 
\end{abstract}

\bibliography{../../../bibtex/Llibres}
\end{document}